\newtheorem{theorem}{Theorem}[section]
\newtheorem{proposition}[theorem]{Proposition}
\theoremstyle{definition}
\newtheorem*{definition*}{Definition}
\newtheorem{definition}[theorem]{Definition}}
\newtheorem*{proposition*}{Proposition}
\newtheorem*{corollary*}{Corollary}
\newtheorem*{lemma*}{Lemma}
\newtheorem*{remark*}{Remark}
\newcommand{\cX}{\mathcal X}
\newcommand{\cL}{\mathcal L}
\newcommand{\F}{\mathbb {F}}
\newcommand{\fq}{\mathbb {F}_q}
\newcommand{\N}{\mathbb {N}}
\newcommand{\Z}{\mathbb {Z}}
\newcommand{\bP}{\mathbb {P}}
\def\cD{\mathcal D}
\definecolor{dgreen}{rgb}{0.13,0.7,.63}
\def\ariane#1 {\fbox {\footnote {\ }}\ \footnotetext { From Ariane: {\color{red}#1}}}
\def\daniele#1 {\fbox {\footnote {\ }}\ \footnotetext { From Daniele: {\color{blue}#1}}}
\def\lu#1 {\fbox {\footnote {\ }}\ \footnotetext { From Luciane: {\color{dgreen}#1}}}
\title{Pure gaps on curves with many rational places}
\date{}
\begin{document}

\author{Daniele Bartoli, Ariane M. Masuda, Maria Montanucci, and Luciane Quoos}

\address{Dipartimento di Matematica e Informatica, Universit\`a degli Studi di Perugia, Via Vanvitelli 1, Perugia, 06123   Italy}
\email {daniele.bartoli@unipg.it}
\address{Department of Mathematics, New York City College of Technology, CUNY, 300 Jay Street, Brooklyn, NY 11201 USA}

\email{amasuda@citytech.cuny.edu}

\address{Dipartimento di Matematica, Informatica ed Economia, Universit\`a degli Studi della Basilicata, Viale dell'Ateneo
lucano 10,  Potenza, 8500 Italy}

\email{maria.montanucci@unibas.it}

\address{Instituto de Matem\'atica, Universidade Federal do Rio de Janeiro,  Av. Athos da Silveira Ramos 149, Centro de Tecnologia - Bloco C, Ilha do Fund\~ao, Rio de Janeiro, RJ 21941-909  Brazil}

\email{luciane@im.ufrj.br}

\begin{abstract}
We consider the algebraic curve defined by $y^m = f(x)$  where $m \geq 2$  and $f(x)$   is a rational function over $\mathbb{F}_q$.
We extend the concept of pure gap to {\bf c}-gap and obtain a criterion to decide when
an $s$-tuple is a {\bf c}-gap at $s$ rational places on the curve. As an application, we obtain many
families of pure gaps at two rational places on curves with many rational places.
\end{abstract}
\maketitle
{\bf Keywords:} Weierstrass semigroup, many-point Algebraic Geometric code, Kummer extension.\\
\indent{\bf MSC 2010 Codes:} 94B05; 14H55\\

\section{Introduction}

Since Goppa introduced the Algebraic Geometric codes in the eighties~\cite{G1982}, a lot of effort has been directed towards obtaining examples of codes with good parameters through different types of algebraic curves. It is well known that codes with optimal parameters are expected from algebraic curves with many rational places over finite fields. These are curves whose numbers of rational places are equal or close to the Hasse-Weil upper bound or other known bounds that may be specific to the curve. For a projective, absolutely irreducible, non-singular algebraic curve $\mathcal{X}$  of genus $g$ over $\fq$, the Hasse-Weil upper bound on the number of $\fq$-rational places is  $q+1+2g\sqrt{q}.$
When this quantity is attained, the curve $\mathcal{X}$ is said to be {\it maximal}. Maximal curves only exist over $\mathbb{F}_{q^2}$.

In  \cite{GKL1993} and \cite{GL1992}  Garcia, Kim and Lax exploit a local property at a rational place on an algebraic curve in order to improve the minimum distance of the code. Specifically, they show that the existence of $\ell$ consecutive gaps at a rational place can be used to  increase the classical upper bound for the minimum distance  by $\ell$ units. In \cite{HK2001} Homma and Kim obtain similar results using gaps at two rational places.
They also define  a special type of gap that they call  {\it pure gap} and obtain further improvements. Then they use pure gaps to refine the parameters of codes constructed from the Hermitian curve over $\mathbb{F}_{q^2}$. In~\cite{CT2005} Carvalho and Torres consider in detail gaps and pure gaps at more than two places. Since then, pure gaps  have been exhaustively studied to show that they provide many other good codes; see~\cite{K1994, YH2017, YH2018, M2001, Mat2004, CMQ2016, BQZ2016}.  The scope of these papers varies depending on the algebraic curve and the number of places considered.

The majority of maximal curves and curves with many rational places has a plane model of Kummer-type.  For those curves with affine equation given by
$$y^m=f(x)^{\lambda} \text{ where }  m \geq 2, \lambda \geq 1 \text{ and } f(x) \text{ is a separable polynomial over } \mathbb{F}_q,$$ general results on gaps and pure gaps can be found in~\cite{ABQ2016, CMQ2016, YH2017, HY2018}.  Applications to codes on particular curves such as the Giulietti-Korchm\'aros curve, the Garcia-G\"{u}neri-Stichtenoth curve, and quotients of the Hermitian curve can be found in~\cite{TC2017,HY2017,YH2017Bis}.
In~\cite{ABQ2016} the authors use a decomposition of certain Riemann-Roch vector spaces due to Maharaj (Theorem~\ref{ThMaharaj}) to describe gaps at one place  arithmetically. The same idea has been used to investigate gaps and pure gaps at several places~\cite{BQZ2016, CMQ2016, HY2018, HY2017, YH2017, YH2018, YH2017Bis}.  Here we continue exploring its capabilities by considering a different setting.

In this work we consider a Kummer-type curve defined by
$$y^m = f(x) \text{ where } m \geq 2 \text{ and } f(x)  \text{ is a rational function over } \mathbb{F}_q.$$
We extend the concept of pure gap to {\it{${\bf c}$-gap}} and provide an arithmetical criterion to decide when an $s$-tuple is a ${\bf c}$-gap at $s$ rational places.
This result is then heavily used to obtain many families of pure gaps at two places on the curves obtained by Giulietti and Korchm\'aros in~\cite{GK2009},  Garcia and Quoos in~\cite{GQ2001}, and Garcia, G\"{u}neri and Stichtenoth in~\cite{GGS2010}. All these curves are known to have many rational places.

In the special case of two rational places, one can also determine the set of pure gaps using a method introduced by  Homma and Kim  in~\cite[Theorem 2.1]{HK2001}.  For $i=1, 2$, let $P_i$  be a rational place and $G(P_i)$ be the set of gaps at $P_i$. There is a bijection $\beta$ from $G(P_1)$ to $G(P_2)$ given by
$$ \beta(n_1) := \min\{  n_2 : (n_1, n_2) \in H(P_1, P_2) \},$$
which can be used to characterize the set of pure gaps at $(P_1,P_2)$ in the following way:
\begin{align}\label{gapssuzuki}
G_0(P_1,P_2)= \{(n_1,n_2) \in G(P_1) \times G(P_2) : n_1 < \beta^{-1}(n_2) \text{ and } \beta(n_1) > n_2\}.
\end{align}
We use this approach to obtain additional families of pure gaps at two places on the Suzuki curve.

We conclude the paper with a summary of the parameters of codes constructed using our families of  pure gaps.

\section{Preliminary results}\label{Sec:Preliminary}

Let $\cX$ be a projective, absolutely irreducible, non-singular algebraic curve of genus $g$ defined over a finite field $\fq $. Let $F=\fq(\cX)$ be its function field over $\fq$. For a function $z$ in $F$, let $(z)$ and $(z)_\infty$ stand for its divisor and  pole divisor, respectively. We denote by $\bP_F$ the set of places of $F$, and by $\cD_F$ the free abelian group generated by the places of $F$. The elements $D$ in $\cD_F$ are called {\it divisors} and can be written as
$$
D=\sum_{P\in \bP_F}n_P\,P\quad \text{ with } n_P\in \Z, \text{ and } n_P=0\text{ for almost all }P\in\bP_F.
$$
The degree of a divisor $D$ is $\deg(D)=\sum\limits_{P\in \bP_F}n_P \deg P$.
The {\emph{Riemann-Roch vector space}} associated to  $D$ is defined by
$$
\cL(D):=\{z\in F : (z)\ge -D\}\cup \{0\}.
$$
We denote by $\ell(D)$ the dimension of $\cL(D)$ as a vector space over the field of constants $\fq$.

Our convention is that    $\N=\{1,2,\dots\}$ and $\N_0=\N\cup\{0\}$.   For a rational place $P$, that is, a place $P$ of degree one, the {\emph{Weierstrass semigroup}} at $P$ is
$$H(P):=\{n\in \N :  (z)_\infty=nP \text{ for some } z\in F\} \cup \{0\}.$$
We say that $n$ is a {\emph{non-gap}} at $P$ if $n\in H(P)$, and a {\emph{gap}} otherwise. Another way to characterize a gap is using the dimension of Riemann-Roch spaces, namely, $n$ is a gap at $P$ if and only if $\ell((n-1)P)=\ell(nP)$. As a consequence, the Weierstrass Gap Theorem asserts that there exist $g$ gaps at $P$ between $1$ and $2g-1$. There is a natural generalization of the notion of gap at distinct rational places $P_1, \dots, P_s$  of $F$ by setting
$$
H(P_1, \dots, P_s):= \{(n_1, \dots , n_s) \in \N_0^s :  (z)_\infty = n_1P_1 + \cdots + n_sP_s \text{ for some } z \in F  \}
$$
as  the Weierstrass semigroup at $(P_1, \dots , P_s)$.  The elements in $$G(P_1, \dots, P_s):=\N_0^s \setminus H(P_1, \dots, P_s)$$ are called  gaps, and there is  a finite number of them.  Gaps at several places can be described in terms of the dimensions of Riemann-Roch spaces:
$(n_1, \dots , n_s)$ is a gap at $(P_1, \dots , P_s)$ if and only if
$$\ell\left(\sum_{i=1}^s n_iP_i- P_j\right)=\ell\left(\sum_{i=1}^s n_iP_i\right) \text{ for some } j \in \{ 1, \dots , s\}.$$

Homma and Kim~\cite{HK2001} introduced the important concept of {\it pure gap}.  An
$s$-tuple $(n_1, \dots, n_s)$ $\in\N_0^s$ is a {\it pure gap} at $(P_1, \dots, P_s)$ if
$$
\ell\left(\sum_{i=1}^s n_iP_i- P_j\right)=\ell\left(\sum_{i=1}^s n_iP_i\right) \text{ for all } j \in \{1, \dots, s\}.
$$
\noindent The set of pure gaps at $(P_1,\ldots,P_s)$ is denoted by $G_0(P_1,\ldots,P_s)$.
Clearly, $G_0(P_1,\ldots,P_s)$ is contained in $G(P_1,\ldots,P_s)$. Carvalho and Torres~\cite[Lemma 2.5]{CT2005} showed that  $(n_1, \dots, n_s)$ is a pure gap at $(P_1, \dots ,P_s)$ if and only if $\ell\left(\sum_{i=1}^s n_iP_i\right)= \ell\left(\sum_{i=1}^s (n_i-1)P_i\right)$.


Now we turn our attention to Kummer extensions. Let $K$ be the algebraic closure of  $\mathbb{F}_q$ and $p$ be the characteristic of $\mathbb F_q$.
\begin{definition}\label{kummer}
A {\emph{Kummer extension}} is an algebraic function field $F/K(x)$ defined by $y^m=f(x)$ where $f(x)$ is a rational function over $K$ with $m\geq 2$ and $p \nmid m.$ For a zero or pole of $f(x)$, let $Q$ be the place of $\mathbb{P}_{K(x)}$ associated to it. To be more precise, we assume the following.
\begin{enumerate}
\item $Q_1,\ldots, Q_r \in \,\mathbb{P}_{K(x)}$ are all places corresponding to zeros and poles of $f(x)$. We denote the
multiplicity of $Q_i$ by  $\lambda_i:= v_{Q_i}(f(x)) \in \mathbb{Z}$.
\item $Q_1,\ldots, Q_s \in \,\mathbb{P}_{K(x)}$ with $s\leq r$ are totally ramified places in the extension $F/K(x).$
\item For $i=1, \dots, r$, let $P_{i, j}$ with $1 \leq j \leq d_i:=\gcd(m, \lambda_i)$  be the  places in $\mathbb{P}_F $ lying over $ Q_i.$ When $Q_i$ is a totally ramified place in $ F/K(x),$ we denote $ P_{i,1}$ simply by $P_i.$
\item The divisor of the function $y$  in $F$  is \\
$$ (y):=\sum_{i=1}^{r} \sum_{j=1}^{d_i} \frac{\lambda_i}{d_i} P_{i,j}=\sum_{i=1}^{s} \lambda_i P_i   + \sum_{i=s+1}^{r} \sum_{j=1}^{d_i} \frac{\lambda_i}{d_i} P_{i,j}.$$
\item For a divisor $D$ of $F$ and any subfield $E\subseteq F$, write
$D = \displaystyle\sum_{R\in \bP_E}\;\sum_{\stackrel{Q\in \bP_F}{Q|R}}\, n_Q\, Q$.
The restriction of $D$ to $E$ is
$$
D_{|_{E}}:= \sum\limits_{R\in \bP_E} \min\,\left\{\left\lfloor\frac{n_Q}{e(Q|R)}\right\rfloor :
{Q|R}\right\}\,R
$$
where $e(Q|R)$ is the ramification index of $Q$ over $R$.
\end{enumerate}
\end{definition}

The following result by Maharaj  is a key ingredient in our arithmetic characterization of ${\bf c}$-gaps at several places.

\begin{theorem}[\!\!{\cite[Theorem 2.2]{M2004}}] \label{ThMaharaj}
Let $F/\fq(x)$ be a Kummer extension of degree $m$ defined by $y^m=f(x)$. Then, for any divisor $D$ of $F$ that is invariant under the action of $Gal(F|\fq(x))$, we have that
\begin{equation*}
 \mathcal{L}(D)= \bigoplus\limits_{t=0}^{m-1} \mathcal L\left(\left[D+(y^t)\right]\Big|_{\fq(x)}\right)\,y^t
 \end{equation*}
where $\left[D+(y^t)\right]\Big|_{\fq(x)}$ denotes the restriction of the divisor $D+(y^t)$ to $\fq(x)$.
\end{theorem}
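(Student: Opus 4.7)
The plan is to exploit the $\mathbb{F}_q(x)$-basis $\{1, y, y^2, \ldots, y^{m-1}\}$ of $F$ together with the fact that $\mathcal{L}(D)$ is stable under $G := \mathrm{Gal}(F/\mathbb{F}_q(x))$. Every $z \in F$ has a unique expansion $z = \sum_{t=0}^{m-1} z_t y^t$ with $z_t \in \mathbb{F}_q(x)$, so it suffices to establish (i) that $z \in \mathcal{L}(D)$ if and only if each summand $z_t y^t$ belongs to $\mathcal{L}(D)$, and (ii) that $z_t y^t \in \mathcal{L}(D)$ is equivalent to $z_t \in \mathcal{L}([D+(y^t)]|_{\mathbb{F}_q(x)})$.

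For (i), the ``if'' direction is immediate from linearity. For the converse, enlarge the constant field to include a primitive $m$-th root of unity $\zeta$ if necessary, so that $G$ is cyclic, generated by $\sigma : y \mapsto \zeta y$. Since $\gcd(m,p)=1$, the character projectors $e_t := \frac{1}{m}\sum_{j=0}^{m-1} \zeta^{-jt} \sigma^j$ are well-defined and isolate the $\zeta^t$-eigenspace, which is exactly $\mathbb{F}_q(x) \cdot y^t$. The $G$-invariance of $D$ forces $\sigma(\mathcal{L}(D)) = \mathcal{L}(D)$, so $e_t(z) = z_t y^t$ lies in $\mathcal{L}(D)$ for every $z \in \mathcal{L}(D)$.

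For (ii), fix $t$ and write $D + (y^t) = \sum_R n_R R$ as a divisor of $F$. Then $z_t y^t \in \mathcal{L}(D)$ is equivalent to $(z_t) + (y^t) \geq -D$, i.e., $v_R(z_t) \geq -n_R$ for every place $R$ of $F$. Because $z_t \in \mathbb{F}_q(x)$, extension of valuations gives $v_R(z_t) = e(R|P)\,v_P(z_t)$, where $P$ is the place of $\mathbb{F}_q(x)$ below $R$. Collecting these constraints at all $R$ lying over a fixed $P$ yields
$$v_P(z_t) \geq -\min_{R|P}\left\lfloor \frac{n_R}{e(R|P)} \right\rfloor,$$
which by item (5) of Definition \ref{kummer} is exactly $z_t \in \mathcal{L}([D+(y^t)]|_{\mathbb{F}_q(x)})$. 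Assembling the $m$ summands then delivers the claimed decomposition.

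The delicate point is the eigenspace argument in (i) when $\mathbb{F}_q$ does not contain $\zeta$: the group $G$ is then a proper subgroup of the geometric Galois group and the projectors $e_t$ are not defined over $\mathbb{F}_q$. This is handled by performing the projection after extending scalars to $\mathbb{F}_q(\zeta)$ and observing that the uniqueness of the expansion $z = \sum z_t y^t$ over $\mathbb{F}_q(x)$ automatically forces each component $z_t y^t$ to be $\mathbb{F}_q$-rational, so the summand-wise decomposition descends from $\mathbb{F}_q(\zeta)(x,y)$ back to $F$.
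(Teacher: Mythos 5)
The paper does not actually prove this statement: it is quoted from Maharaj \cite{M2004} and used as a black box, so there is nothing internal to compare against. Your argument for the case $\zeta_m\in\fq$ is correct and is, in substance, the standard proof of this decomposition. Both halves check out: the averaging operators $e_t=\frac1m\sum_j\zeta^{-jt}\sigma^j$ are defined because $p\nmid m$, they satisfy $e_t(z)=z_ty^t$, and they preserve $\cL(D)$ because $\sigma(\cL(D))=\cL(\sigma D)=\cL(D)$; and in step (ii) the equivalence $e(R|P)\,v_P(z_t)\ge -n_R \iff v_P(z_t)\ge -\lfloor n_R/e(R|P)\rfloor$ is exactly what produces the floor in the definition of the restricted divisor, with directness of the sum coming from the uniqueness of the expansion in the basis $1,y,\dots,y^{m-1}$.

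The last paragraph, however, does not survive scrutiny. If $\zeta\notin\fq$, then $\mathrm{Gal}(F|\fq(x))$ is a proper subgroup of $\Z/m\Z$, and invariance of $D$ under this smaller group does \emph{not} imply that its conorm in $F\cdot\fq(\zeta)$ is invariant under the full cyclic group --- which is precisely what your projection after extension of scalars requires. Indeed the statement is false in that generality: take $q=2$, $m=3$, $y^3=x$, so $F=\F_2(y)$ and $\mathrm{Aut}(F|\F_2(x))$ is trivial (the other roots $\omega y$ live in $\F_4(y)$), hence every divisor is ``Galois-invariant''. For $D=2P$ with $P$ the place $y=1$ one has $\ell(D)=3$, while the restricted divisors are $0$, $-Q_\infty$, $-Q_\infty$ for $t=0,1,2$, giving total dimension $1$; concretely, $\frac{1}{y+1}=\frac{y^2+y+1}{x+1}\in\cL(2P)$ has all three components equal to $\frac{1}{x+1}\notin\cL(0)$, so the componentwise membership in (i) fails. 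The correct reading of the theorem (and of Maharaj's hypotheses) is that $\fq$ contains a primitive $m$-th root of unity, i.e.\ $F/\fq(x)$ is Galois of degree $m$ --- a condition met in every application in this paper ($m\mid q^6-1$ for GK, $m\mid q^{2n}-1$ for $\cX_1$ and GGS, etc.). You should state that hypothesis and drop the descent paragraph rather than try to dispense with it.
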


Pure gaps are related  to the improvement of  the designed minimum distance of Algebraic Geometric codes. For the differential code $C_{\Omega}(D,G)$ associated to the divisors $D$ and $G$, we denote its length, dimension and minimum distance by $n$, $k$ and $d$, respectively;  see~\cite{S2009}. 

We recall that   $\cX$ denotes a projective, absolutely irreducible, non-singular algebraic curve of genus $g$.

\begin{theorem}[\!\!{\cite[Theorem 3.4]{CT2005}}] \label{distmanypoints}
Let $P_1, \dots , P_s,Q_1, \dots, Q_n$ be pairwise distinct $\fq$-rational places on $\cX$, and let $(a_1, \dots , a_s)$, $(b_1, \dots , b_s)\in\N_0^s$ be pure gaps at $(P_1, \dots, P_s)$ with $a_i \leq b_i$ for each $i$. Consider the divisors $D= Q_1+ \cdots + Q_n$ and $G= \sum_{i=1}^s (a_i+b_i-1)P_i$.
If every $(c_1, \dots , c_s)\in\N_0^s$ with $a_i \leq c_i \leq b_i$  for $i\in\{1,\dots,s\}$ is a pure gap at $(P_1, \dots , P_s)$, then $$ d \geq \deg(G) -(2g-2)+\sum_{i=1}^s (b_i-a_i+1).$$
\end{theorem}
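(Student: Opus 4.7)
The plan is to strengthen the classical Goppa lower bound $d \geq \deg G - (2g-2)$ by using the pure gap hypothesis to rule out non-zero codewords of weight strictly below $\deg G - (2g-2) + \sum_{i=1}^s (b_i - a_i + 1)$.

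I argue by contradiction: suppose there is a non-zero codeword of $C_\Omega(D,G)$ of weight
\begin{equation*}
w < \deg G - (2g - 2) + \sum_{i=1}^s (b_i - a_i + 1),
\end{equation*}
realized by a differential $\omega \in \Omega(G-D) \setminus \Omega(G)$. Writing $D_T$ for the sum of those $Q_j$ at which $\text{res}_{Q_j}(\omega) \neq 0$, one has $\deg D_T = w$ and $\omega \in \Omega(G - D_T) \setminus \Omega(G)$. Set $G^* := \sum_{i=1}^s 2 b_i P_i$, so that $\deg G^* = \deg G + \sum_{i=1}^s (b_i - a_i + 1)$ and $G^* \geq G$. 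The assumption on $w$ rewrites as $\deg(G^* - D_T) > 2g - 2$, which forces $\Omega(G^* - D_T) = \{0\}$. Hence the plan reduces to proving that $\omega \in \Omega(G^* - D_T)$, i.e.\ $v_{P_i}(\omega) \geq 2 b_i$ for every $i$; this would yield $\omega = 0$, contradicting $\omega \notin \Omega(G)$.

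The main obstacle, and where the pure gap hypothesis enters essentially, is this valuation bound. By the Carvalho-Torres characterization~\cite[Lemma 2.5]{CT2005}, $(c_1,\dots,c_s)$ is a pure gap at $(P_1,\dots,P_s)$ iff $\ell(\sum c_i P_i) = \ell(\sum (c_i-1) P_i)$, and iterating this identity across the box $[a_1,b_1] \times \cdots \times [a_s, b_s]$ yields a controlled description of the regular differentials supported on $P_1,\dots,P_s$; in particular, via Riemann-Roch, one obtains $\sum_{i=1}^s (b_i - a_i + 1)$ extra regular differentials with precisely prescribed zero orders at the $P_i$. I would then argue by contradiction: if $v_{P_{i_0}}(\omega) < 2 b_{i_0}$ for some $i_0$, then setting $c_{i_0} := v_{P_{i_0}}(\omega) - b_{i_0} + 1 \in [a_{i_0}, b_{i_0}]$ and choosing $c_i \in [a_i, b_i]$ for $i \neq i_0$ places $(c_1,\dots,c_s)$ inside the box, and an appropriate ratio of $\omega$ with a well-chosen differential produced by the pure gap chain gives a rational function whose polar behaviour at the $P_i$ contradicts the pure gap condition at $(c_1,\dots,c_s)$. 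Controlling the auxiliary zeros and poles of this ratio away from $P_1,\dots,P_s$ so that they remain compatible with the pure gap obstruction is the most delicate technical step.
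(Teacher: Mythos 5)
First, a remark on the comparison itself: the paper does not prove this statement --- it is imported verbatim from Carvalho--Torres \cite[Theorem 3.4]{CT2005}, and the text immediately afterwards observes that it is a particular case of Theorem~\ref{distMany} (the Lundell--McCullough generalized floor bound), obtained by writing $G=A+B+Z$ with $A=B=\sum_{i=1}^s(a_i-1)P_i$ and $Z=\sum_{i=1}^s(b_i-a_i+1)P_i$, and by using the box of pure gaps together with the monotonicity of $\ell$ to check the required equality $\mathcal L(A)=\mathcal L(A+Z)=\mathcal L\left(\sum_i b_iP_i\right)$. Your skeleton is instead the classical Garcia--Kim--Lax/Homma--Kim one: reduce everything to the claim that every nonzero $\omega\in\Omega(G-D_T)$ satisfies $v_{P_i}(\omega)\ge 2b_i$ for all $i$. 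That reduction is correct as you set it up: $\deg(G^*-D_T)>2g-2$ does force $\Omega(G^*-D_T)=\{0\}$, and the arithmetic matches.

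The genuine gap is in the only step that uses the hypothesis, namely the proof of the valuation bound, which you describe but do not carry out. The mechanism you propose cannot work as stated. The pure gap condition $\ell\left(\sum_i c_iP_i\right)=\ell\left(\sum_i(c_i-1)P_i\right)$ is, via Riemann--Roch, an \emph{existence} statement for regular differentials with prescribed vanishing orders at the $P_i$ (equivalently, a non-existence statement for functions whose poles are confined to $\{P_1,\dots,P_s\}$). The ratio $f=\omega/\eta$ (or $\eta/\omega$) of your codeword differential with such an $\eta$ does have the intended valuation at $P_{i_0}$, but it also acquires poles at every zero of the denominator away from $P_1,\dots,P_s$: a regular differential with the prescribed orders at the $P_i$ still has roughly $2g-2-\sum_i c_i$ further zeros spread over the curve, over which you have no control, and $\omega$ itself has uncontrolled zeros as well. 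Consequently $f$ lies in no space of the form $\mathcal L\left(\sum_i c_iP_i\right)$ and contradicts nothing; this is not a patchable technicality but a structural obstruction to the ratio approach. The proofs that do close (Carvalho--Torres, or Lundell--McCullough applied as above) argue instead with dimensions/indices of specialty, applying the $\mathcal L$-space equality $\mathcal L\left(\sum_i(a_i-1)P_i\right)=\mathcal L\left(\sum_i b_iP_i\right)$ against the canonical divisor $(\omega)$. As written, the central claim of your proof is asserted rather than proved.
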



\section{${\bf c}$\,-gaps on Kummer extensions}\label{Kummer}

We start this section by generalizing the concept of gap to ${\bf c}$-gap at several places.

\begin{definition}
Let  $P_1, \dots, P_s$ be pairwise distinct $\fq$-rational places on $\mathcal{X}$.
 Given  ${\bf c}=(c_1, \dots, c_s)\in\N_0^s$, we say that ${\bf n}=(n_1, \dots, n_s)\in\N_0^s$ is a {\it ${\bf c}$-gap} at $(P_1, \dots, P_s)$ if
$$\ell \left(\sum_{i=1}^{s} (n_i-c_i)P_i \right) =\ell \left(\sum_{i=1}^{s} n_i P_i\right).$$
\end{definition}

When ${\bf{1}}=(1, \dots, 1)$, a $\bf{1}$-gap at $(P_1, \dots, P_s)$ is a pure gap at $(P_1, \dots, P_s)$. Also, if ${\bf n}$ 
is a $(c_1, \dots, c_s)$-gap at $(P_1, \dots, P_s)$ then   ${\bf n}, {\bf n}-{\bf{1}}, \dots, {\bf n}-\delta\cdot{\bf{1}}$ are all pure gaps at $(P_1, \dots, P_s)$ where $\delta := \min \{ c_i : i=1, \dots, s \}\geq 1$.


Lundell and McCullough proved a bound for the minimum distance $d$ called {\emph{generalized floor bound}}.
Next we state their result using the  ${\bf c}$-gap terminology.


\begin{theorem}[\!\!{\cite[Theorem 3]{LM2006}}]\label{distMany}
Let $P_1, \dots , P_s, Q_1, \dots, Q_n$ be pairwise distinct $\fq$-rational places on $\cX$. Suppose that there exist ${\bf a}=(a_1,\ldots,a_s)$, ${\bf b}=(b_1,\ldots,b_s)$ and ${\bf c}=(c_1,\ldots,c_s) \in \N_0^s$ with each $c_i\leq b_i-1$  such that ${\bf a+c}$ and ${\bf b-1}$ are ${\bf c}$-gaps at $(P_1, \dots, P_s)$. Consider the divisors $D=Q_1+\cdots +Q_n$ and $G=\sum_{i=1}^{s} (a_i+b_i-1)P_i$.
If $\Omega(G-D)\neq \emptyset$ then $$d \geq \deg (G) -(2g-2)+\sum_{i=1}^{s} c_i.$$
\end{theorem}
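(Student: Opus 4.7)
My approach is a proof by contradiction. Suppose $C_\Omega(D,G)$ contains a nonzero codeword of weight $d<\deg G-(2g-2)+\sum_{i=1}^{s}c_i$, realized by some nonzero $\omega\in\Omega(G-D)$. Let $D'\leq D$ be the effective divisor of the places at which $\omega$ has nonzero residue, so that $\deg D'=d$ and $\omega\in\Omega(G-D')$. I would reduce the proof to the dimension bound
\[
\dim_{\fq}\Omega(G-D')\;\geq\;\sum_{i=1}^{s}c_i+1.
\]
Once this is in hand, Serre duality identifies $\dim\Omega(G-D')$ with $\ell(W-G+D')$ for any canonical divisor $W$, and applying the elementary inequality $\ell(E)\leq \deg E+1$ (which holds whenever $\ell(E)\geq 1$) to $E=W-G+D'$ gives $\deg(W-G+D')\geq \sum_{i=1}^{s}c_i$, which rearranges to $d\geq \deg G-(2g-2)+\sum_{i=1}^{s}c_i$, contradicting the choice of $\omega$.

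To produce the required $\sum_i c_i+1$ independent differentials inside $\Omega(G-D')$, I would use both ${\bf c}$-gap hypotheses in tandem. Write $A=\sum a_iP_i$, $B-1=\sum(b_i-1)P_i$, and $C=\sum c_iP_i$. The hypothesis that ${\bf b-1}$ is a ${\bf c}$-gap translates to $\mathcal L(B-1)=\mathcal L(B-1-C)$, so every $f\in\mathcal L(B-1)$ satisfies $v_{P_i}(f)\geq -(b_i-1)+c_i$. Multiplication by $\omega$ then yields a well-defined injection $\mathcal L(B-1)\hookrightarrow \Omega(A+C-D')$, $f\mapsto f\omega$, because $v_{P_i}(f\omega)\geq a_i+c_i$ at every $i$. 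Dually, the hypothesis that ${\bf a+c}$ is a ${\bf c}$-gap is $\ell(A)=\ell(A+C)$, which by Serre duality is equivalent to $\dim\Omega(A)-\dim\Omega(A+C)=\sum_i c_i$; thus $\Omega(A)$ contains $\sum_i c_i$ differentials that are linearly independent modulo $\Omega(A+C)$. A Riemann--Roch dimension tally combining these two inputs should then produce the desired lower bound on $\dim\Omega(G-D')$.

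The main technical obstacle is matching the two sources of information. The injection supplied by the hypothesis on ${\bf b-1}$ lands in $\Omega(A+C-D')$, which strictly contains $\Omega(G-D')$, while the extra differentials coming from the hypothesis on ${\bf a+c}$ live a priori in $\Omega(A)$, also strictly larger than $\Omega(G-D')$. Transferring both pieces of information into the smaller space $\Omega(G-D')$ appears to require a local analysis at each $P_i$; the assumption $c_i\leq b_i-1$ is crucial here, as it allows the extra vanishing $v_{P_i}(f\omega)\geq a_i+c_i$ forced by the ${\bf b-1}$-hypothesis to be paired compatibly with the ``leading coefficient'' surjection $\Omega(A)\twoheadrightarrow \bigoplus_i \fq^{c_i}$ extracted from the ${\bf a+c}$-hypothesis, so that the resulting differentials vanish to order at least $a_i+b_i-1$ at every $P_i$ and thus lie in $\Omega(G-D')$. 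Once this local matching is in place, the remainder of the argument is Riemann--Roch bookkeeping.
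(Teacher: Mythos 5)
First, a point of reference: the paper does not prove this theorem. It is quoted from Lundell and McCullough \cite[Theorem 3]{LM2006}, so there is no internal argument to compare yours against; I can only judge your proposal on its own terms.

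Your setup is fine (passing from $\omega\in\Omega(G-D)$ to $\omega\in\Omega(G-D')$ with $\deg D'=d$), and so are the individual ingredients: the translation of the two ${\bf c}$-gap hypotheses, the injection $f\mapsto f\omega$ of $\mathcal L\bigl(\sum_i(b_i-1)P_i\bigr)$ into $\Omega(A+C-D')$, and the identity $\dim\Omega(A)-\dim\Omega(A+C)=\sum_i c_i$. The fatal problem is the statement you reduce everything to, namely $\dim_{\fq}\Omega(G-D')\ge\sum_i c_i+1$. This is strictly stronger than the degree inequality $\deg\bigl((\omega)-G+D'\bigr)\ge\sum_i c_i$, which is what the conclusion is actually equivalent to, and it is false in general. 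Whenever the bound of the theorem is attained with equality and $\sum_i c_i\ge 1$ (and it is attained in known examples), the divisor $G-D'$ has degree $2g-2-\sum_i c_i$; if $\ell(G-D')=0$ then $\dim\Omega(G-D')=1-g+\sum_i c_i\le\sum_i c_i$, and otherwise Clifford's theorem gives $\dim\Omega(G-D')\le 1+\tfrac12\sum_i c_i$ --- in either case strictly less than $1+\sum_i c_i$. A concrete counterexample: on an elliptic curve take $s=1$, ${\bf a}=(0)$, ${\bf b}=(2)$, ${\bf c}=(1)$, so $G=P_1$ and both ${\bf c}$-gap hypotheses hold trivially; the code $C_\Omega(D,G)$ has dimension $n-1$, hence minimum distance exactly $2$, and for a weight-two word supported on $D'=Q_1+Q_2$ one computes $\dim\Omega(G-D')=\ell\bigl((\omega)-P_1+Q_1+Q_2\bigr)=1$, not $2$. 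Since your proposed derivation of the dimension bound never uses the contradiction hypothesis $d<\deg G-(2g-2)+\sum_i c_i$, it would, if valid, apply to this example as well; so no amount of ``local matching'' can close the gap. The quantity that must be bounded below by $\sum_i c_i$ is the \emph{degree} of the effective divisor $(\omega)-G+D'$, not the number of independent differentials in $\Omega(G-D')$, and obtaining that degree bound requires playing the two multiplication maps against each other directly (as Lundell--McCullough do) rather than funneling both through $\dim\Omega(G-D')$. You also explicitly leave the combination step as an unresolved ``technical obstacle,'' so even setting aside the counterexample, the argument is a plan rather than a proof.
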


Theorem \ref{distmanypoints} can be seen as a particular case of Theorem \ref{distMany}. 

There are instances in which we can apply Theorem \ref{distMany}, but not Theorem~\ref{distmanypoints}.  This occurs when some $c_i=0$ and  ${\bf a}$,  ${\bf b}$ are not  pure gaps, or when $a_i \leq b_i$ and $a_j \geq b_j$ for some $i, j$.

\begin{theorem}\label{puregapsmanypoints}
Let $P_1, \dots, P_s \in \mathbb{P}_F$ be pairwise distinct totally ramified places in the Kummer extension $F/K(x)$.
Let ${\bf c}=(c_1, \dots, c_s)\in\N_0^s$.  Then  $(n_1, \dots, n_s) \in\N_0^s$ is a ${\bf c}$-gap at $(P_1, \dots, P_s)$ if and only if for every $t \in \{0,\ldots ,m - 1\}$ exactly one of the two following conditions is satisfied:
\begin{enumerate} [{\normalfont(i)}]
\item $\displaystyle\sum_{i=1}^s \left \lfloor \frac{n_i+t\lambda_i}{m}\right\rfloor  + \sum_{i=s+1}^r \left \lfloor \frac{t\lambda_i}{m}\right\rfloor < 0$
\item $\displaystyle\left \lfloor \frac{n_i+t\lambda_i}{m}\right\rfloor=\left \lfloor \frac{n_i-c_i+t\lambda_i}{m}\right\rfloor$
for all $i \in \{ 1, \dots, s\}.$
\end{enumerate}
\end{theorem}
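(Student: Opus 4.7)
The plan is to translate the $\mathbf{c}$-gap condition $\ell(\sum n_i P_i - \sum c_i P_i) = \ell(\sum n_i P_i)$ into an equality of dimensions on $K(x)$ via Maharaj's decomposition, and then read off each summand using the fact that $K(x)$ is the function field of $\mathbb{P}^1$, so $\ell(E) = \max(0, \deg E + 1)$ for any divisor $E$. Set $D := \sum_{i=1}^s n_i P_i$ and $C := \sum_{i=1}^s c_i P_i$. Since each $P_i$ is totally ramified in $F/K(x)$, it is fixed by $\mathrm{Gal}(F/K(x))$, so both $D$ and $D-C$ are Galois-invariant, and Theorem~\ref{ThMaharaj} applies to give
$$\mathcal{L}(D) = \bigoplus_{t=0}^{m-1} \mathcal{L}([D+(y^t)]|_{K(x)})\, y^t, \qquad \mathcal{L}(D-C) = \bigoplus_{t=0}^{m-1} \mathcal{L}([(D-C)+(y^t)]|_{K(x)})\, y^t.$$

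Because $C$ is effective, for each $t$ the restriction divisor $[(D-C)+(y^t)]|_{K(x)}$ is bounded above coefficient-wise by $[D+(y^t)]|_{K(x)}$, so the $t$-summand of $\mathcal{L}(D-C)$ sits inside the $t$-summand of $\mathcal{L}(D)$. Equality of the total dimensions therefore reduces to term-by-term equality for every $t \in \{0,\ldots,m-1\}$. Next I would compute each restriction explicitly from Definition~\ref{kummer}(4): at $Q_i$ with $i \leq s$ only $P_i$ lies above, with ramification $m$, contributing $\lfloor(n_i + t\lambda_i)/m\rfloor$; at $Q_i$ with $i > s$ the $d_i$ places above all have ramification $m/d_i$ and identical coefficient $t\lambda_i/d_i$, contributing $\lfloor t\lambda_i/m\rfloor$; at every other place of $K(x)$ the coefficient is $0$. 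The formula for $[(D-C)+(y^t)]|_{K(x)}$ is identical except that $n_i$ is replaced by $n_i - c_i$ in the first sum.

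Denote the degrees of these two divisors by $A_t$ and $B_t$ respectively; note $B_t \leq A_t$ since $c_i \geq 0$ forces each floor of $(n_i - c_i + t\lambda_i)/m$ to be $\leq$ the corresponding floor of $(n_i + t\lambda_i)/m$. The Riemann--Roch computation on $\mathbb{P}^1$ reduces the desired equality to $\max(0, A_t+1) = \max(0, B_t+1)$, which holds precisely when either $A_t < 0$ (both sides vanish)---which is condition (i)---or $A_t = B_t$. Since $B_t \leq A_t$ is a sum of term-wise inequalities, $A_t = B_t$ forces $\lfloor(n_i + t\lambda_i)/m\rfloor = \lfloor(n_i - c_i + t\lambda_i)/m\rfloor$ for each $i \in \{1,\ldots,s\}$, which is condition (ii). The main delicate point is the reduction to per-$t$ equality inside the Maharaj decomposition and the correct handling of the non-totally-ramified places $Q_{s+1},\ldots,Q_r$, where the minimum over the $d_i$ conjugate places must collapse to a single floor; once these are in place the rest is bookkeeping with floor functions.
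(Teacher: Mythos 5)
Your proposal is correct and follows essentially the same route as the paper: apply Maharaj's decomposition to both $\sum n_iP_i$ and $\sum(n_i-c_i)P_i$, compute the restrictions to $K(x)$ explicitly, and use that $K(x)$ has genus $0$ so each summand's dimension is $\max(0,\deg+1)$. Your explicit justification of the term-by-term reduction (via the coefficient-wise inequality $B_t\le A_t$ coming from $c_i\ge 0$) is a point the paper leaves implicit, but it is the same argument.
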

\begin{proof}
Let $t\in\{0,\ldots,m-1\}$. Using the notation as in Definition \ref{kummer},  we have that
$$\sum_{i=1}^s n_iP_i+(y^t)=\sum_{i=1}^s  \left(n_i+t\lambda_i\right)P_{i}+\sum_{i=s+1}^{r}\sum_{j=1}^{d_i} t\frac{\lambda_i}{d_i} P_{i,j},$$
and the restriction of this divisor to $K(x)$ is
$$
\left[\sum_{i=1}^s n_iP_i+\left(y^t\right) \right]\Bigg|_{{K(x)}} =
\sum_{i=1}^s \left \lfloor \frac{n_i+t\lambda_i}{m}\right\rfloor Q_i + \sum_{i=s+1}^r \left \lfloor \frac{t\lambda_i}{m}\right\rfloor Q_i.
$$
By definition, ${\bf n}$ is a ${\bf c}$-gap at $(P_1, \dots, P_s)$ if
$\ell \left(\sum_{i=1}^{s} (n_i-c_i)P_i \right) =\ell (\sum_{i=1}^{s} n_i P_i).$
By Theorem~\ref{ThMaharaj}, we have that
$$
\mathcal{L}\left(\sum_{i=1}^s n_iP_i\right)=\bigoplus_{t=0}^{m-1}\mathcal{L}\left(\left[\sum_{i=1}^s n_iP_i+\left(y^t\right) \right]\Bigg |_{K(x)} \right)y^t.
$$
So
\begin{align*}
 \ell\left(\sum_{i=1}^s n_iP_i\right) &= \sum_{t=0}^{m-1}\ell\left( \sum_{i=1}^s \left \lfloor \frac{n_i+t\lambda_i}{m}\right\rfloor Q_i + \sum_{i=s+1}^r \left \lfloor \frac{t\lambda_i}{m}\right\rfloor Q_i\right ) \text{ and}\\
\ell\left(\sum_{i=1}^s (n_i-c_i)P_i\right) &=\sum_{t=0}^{m-1} \ell\left( \sum_{i=1}^s \left \lfloor \frac{n_i-c_i+t\lambda_i}{m}\right\rfloor Q_i + \sum_{i=s+1}^r \left \lfloor \frac{t\lambda_i}{m}\right\rfloor Q_i \right ).
\end{align*}
We conclude that ${\bf n}$ is a ${\bf c}$-pure gap at $(P_1,\ldots,P_s)$ if and only if
\begin{align*}
&\ell\left( \sum_{i=1}^s \left \lfloor \frac{n_i+t\lambda_i}{m}\right\rfloor Q_i + \sum_{i=s+1}^r \left \lfloor \frac{t\lambda_i}{m}\right\rfloor Q_i \right)\\
&-\ell\left( \sum_{i=1}^s \left \lfloor \frac{n_i-c_i+t\lambda_i}{m}\right\rfloor Q_i + \sum_{i=s+1}^r \left \lfloor \frac{t\lambda_i}{m}\right\rfloor Q_i\right)=0
\end{align*}
for all $t\in \{0,\ldots,m-1\}$. Since $\fq(x)$ has genus $0$, the latter identity holds if and only if for all $t\in \{0,\ldots,m-1\}$ either
$$\sum_{i=1}^s \left \lfloor \frac{n_i+t\lambda_i}{m}\right\rfloor  + \sum_{i=s+1}^r \left \lfloor \frac{t\lambda_i}{m}\right\rfloor  < 0$$
or
$$ \sum_{i=1}^s \left \lfloor \frac{n_i+t\lambda_i}{m}\right\rfloor + \sum_{i=s+1}^r \left \lfloor \frac{t\lambda_i}{m}\right\rfloor  \geq 0 \quad\text{ and }\quad \left \lfloor \frac{n_i+t\lambda_i}{m}\right\rfloor=\left \lfloor \frac{n_i-c_i+t\lambda_i}{m}\right\rfloor$$

\noindent for $i=1, \dots, s$. This concludes the proof.
\end{proof}

Our next goal is  to obtain families of pure gaps on specific curves using Theorem~\ref{puregapsmanypoints}.


\subsection{Pure gaps on the GK curve}
The Giulietti-Korchm\'aros curve  over $\mathbb{F}_{q^6}$ is a non-singular curve in ${\rm PG}(3,K)$ defined by the affine equations:
\[GK:
\begin{cases}
Y^{q+1} = X^q+X,\\
Z^{q^2-q+1} =Y^{q^2}-Y.
\end{cases}
\]
It has genus $g=\frac{(q^3+1)(q^2-2)}{2}+1$, and the number  of its $\mathbb{F}_{q^6}$-rational places is $q^8-q^6+q^5+1$. The GK curve first appeared in~\cite{GK2009} as a maximal curve over $\mathbb{F}_{q^6}$, since the latter number coincides with the Hasse-Weil bound, $q^6+2gq^3+1$.
The GK curve is the first example of a maximal curve that is not $\mathbb{F}_{q^6}$-covered by the Hermitian curve, provided that $q>2$.

A plane model for the GK curve is given by the affine equation:
\begin{equation*}
y^{q^3+1}= (x^q+x)((x^q+x)^{q-1}-1)^{q+1}.
\end{equation*}
Let $F=\mathbb{F}_{q^6}(x,y)/\mathbb{F}_{q^6}$ be its function field.  We denote by $P_{\infty}$ the only place of $F$ associated to the pole of $x$.

\begin{proposition}\label{PropGK1} Let $P_1$ and $P_2$ be two totally ramified places different from $P_{\infty}$ on the GK curve.
For $\alpha\in\{0,\dots, q^2-3\}$ and $\beta\in\{0,1\}$,  let
\begin{align*}
n_1 &:= 1+\alpha(q^3+1)+\beta (q^2-q+1) \text{ and } \\
n_2 &:= q^5-3q^3+2q^2-q-1-\alpha(q^3+1)-\beta (q^2-q+1).
\end{align*}
Then the pair $(n_1,n_2)$ is a pure gap at $(P_1, P_2)$.
\end{proposition}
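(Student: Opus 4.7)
The plan is to apply Theorem \ref{puregapsmanypoints} directly with $s=2$ and ${\bf c}=(1,1)$, so that a ${\bf c}$-gap becomes exactly a pure gap. First I would read off the data of the Kummer extension $y^m=f(x)$ with $m=q^3+1$ and $f(x)=(x^q+x)((x^q+x)^{q-1}-1)^{q+1}$. The zeros and pole of $f$ in $\mathbb{P}_{K(x)}$ are: the $q$ simple zeros of $x^q+x$ (at the elements of $\mathbb{F}_q$), each with multiplicity $\lambda=1$ and totally ramified since $\gcd(m,1)=1$; the $q^2-q$ simple zeros of $(x^q+x)^{q-1}-1$, each with $\lambda=q+1$, not totally ramified; and the pole $Q_\infty$ with $\lambda_\infty=-q^3$, totally ramified because $\gcd(q^3+1,q^3)=1$. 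Under the hypothesis, $P_1,P_2$ lie above two of the zeros of $x^q+x$, so $\lambda_1=\lambda_2=1$.

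Next I would analyze which $t\in\{0,\dots,m-1\}$ can violate condition (ii). Since $\lambda_1=\lambda_2=1$, condition (ii) fails at $t$ precisely when $m\mid n_i+t$ for some $i\in\{1,2\}$. Reducing modulo $m$ using $q^3\equiv -1$ and $q^5\equiv -q^2$, I expect $n_1\bmod m = 1+\beta(q^2-q+1)$ and $n_2\bmod m=(q^2-q+2)-\beta(q^2-q+1)$, so that regardless of $\beta$ the two ``bad'' residues are $t_1=q^3$ and $t_2=q^3-q^2+q-1$ (their association with $n_1$ or $n_2$ swaps when $\beta$ changes from $0$ to $1$). For every other $t$, condition (ii) holds trivially, and the theorem is satisfied.

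It remains to verify condition (i) at $t_1$ and $t_2$. Writing $n_2=(q^2-3-\alpha)(q^3+1)+(q^2-q+2)-\beta(q^2-q+1)$, a direct simplification should give, at $t_1=q^3$, the two main floors $\lfloor(n_1+t_1)/m\rfloor=\alpha+1$ and $\lfloor(n_2+t_1)/m\rfloor=q^2-2-\alpha$; the auxiliary contribution $(q-2)\lfloor t_1/m\rfloor+(q^2-q)\lfloor t_1(q+1)/m\rfloor+\lfloor -q^3t_1/m\rfloor$ becomes $0+q(q^2-q)+(-q^3)=-q^2$, so the total is $-1<0$. At $t_2=q^3-q^2+q-1$, the key identities are $(q^3-q^2+q-1)(q+1)=q^4-1=q(q^3+1)-(q+1)$, yielding $\lfloor t_2(q+1)/m\rfloor=q-1$, and the polynomial division $-q^3t_2=(q^3+1)(-q^3+q^2-q+1)+(q^3-q^2+q-1)$, yielding $\lfloor -q^3 t_2/m\rfloor=-q^3+q^2-q+1$; together with $\lfloor(n_1+t_2)/m\rfloor=\alpha+\beta$ and $\lfloor(n_2+t_2)/m\rfloor=q^2-2-\alpha-\beta$ one obtains total $(q^2-2)+(-q^2+1)=-1<0$ again.

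I expect the main obstacle to be the floor $\lfloor -q^3 t_2/m\rfloor$ at the second bad residue: it requires an honest polynomial long division rather than a mere reduction modulo $m$, and the delicate cancellation between this large negative quotient and the $q^2-q$ positive contributions of size $q-1$ coming from the $\lambda=q+1$ places is what makes the whole sum land at exactly $-1$. Once these two critical sums have been checked, Theorem \ref{puregapsmanypoints} immediately gives that $(n_1,n_2)$ is a pure gap at $(P_1,P_2)$.
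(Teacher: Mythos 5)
Your proposal is correct and follows essentially the same route as the paper: apply Theorem~\ref{puregapsmanypoints} with ${\bf c}=(1,1)$, use $\lambda_1=\lambda_2=1$ to locate the only problematic values $t=q^3$ and $t=q^3-q^2+q-1=(q^2+1)(q-1)$ (swapping roles between $n_1$ and $n_2$ as $\beta$ changes), and then check that condition (i) evaluates to $-1<0$ at both. Your floor computations, including the contribution $(q^2-q)\lfloor t(q+1)/(q^3+1)\rfloor=(q^2-q)\lfloor t/(q^2-q+1)\rfloor$ and the long division for $\lfloor -q^3t/(q^3+1)\rfloor$, agree with the paper's.
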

\begin{proof}The places $P_1$ and $P_2$ are zeros of $x^q+x$, so $\lambda_1=\lambda_2=1$.
For $i\in\{1,2\}$ and $t \in \{0,\dots, q^3\}$,  we have that $\left \lfloor \frac{n_i+t}{q^3+1}\right \rfloor \neq \left \lfloor \frac{n_i+t-1}{q^3+1}\right \rfloor$ if
and only if $n_i +t \equiv 0\pmod{q^3+1}$.  Furthermore,
$$
n_1+t \equiv 0\pmod{q^3+1}
\quad\Longleftrightarrow\quad
\begin{cases}
t=q^3 & \text{ if } \beta=0 \\
t=(q^2+1)(q-1) & \text{ if } \beta=1
\end {cases}$$
\noindent and
$$
n_2+t \equiv 0 \pmod {q^3+1}
\quad\Longleftrightarrow \quad
\begin{cases}
t=(q^2+1)(q-1) & \text{ if }  \beta=0 \\
t=q^3 & \text{ if }  \beta=1.\\
\end{cases}
$$
It remains to show that the condition Theorem~\ref{puregapsmanypoints}(i) holds in the above cases.  We compute
\begin{align*}
& \left \lfloor \frac{n_1+t}{q^3+1}\right \rfloor +\left \lfloor \frac{n_2+t}{q^3+1}\right \rfloor
+(q^2-q) \left \lfloor \frac{t}{q^2-q+1}\right \rfloor
+\left \lfloor \frac{-t q^3}{q^3+1}\right \rfloor \\
=\quad & \begin{cases}
q^2-1+(q^2-q)q-q^3=-1 &\text{ if } t=q^3\\
q^2-2+(q^2-q)(q-1)-q^3+q^2-q+1=-1 & \text{ if } t=(q^2+1)(q-1).
\end{cases}
\end{align*}
Therefore $(n_1, n_2)$ is a pure gap at $(P_1,P_2)$.
\end{proof}

\begin{proposition}
Let $P_1$ and $P_2$ be two totally ramified places different from $P_{\infty}$ on the GK curve.
The pair $(1,2g-2)$ is a $(1,0)$-gap, but not a pure gap, at $(P_1,P_2)$.
\end{proposition}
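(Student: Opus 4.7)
The plan is to apply Theorem~\ref{puregapsmanypoints} twice to the same setup as in Proposition~\ref{PropGK1}: on the GK curve, $m = q^3+1$, the two places $P_1, P_2$ lie above zeros of $x^q+x$ so that $\lambda_1 = \lambda_2 = 1$, and the four-term sum that appears in condition (i) is precisely the one evaluated at the end of the proof of Proposition~\ref{PropGK1}. I will also use that $2g - 2 = (q^3+1)(q^2-2)$, i.e., $m \mid (2g-2)$. The key observation that organizes the proof is: since pure-gap at $(P_1,P_2)$ amounts to being both a $(1,0)$-gap and a $(0,1)$-gap, once the $(1,0)$-gap property is established it suffices to exhibit a single $t$ at which the $(0,1)$-gap criterion breaks down.

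For the $(1,0)$-gap assertion, I would apply Theorem~\ref{puregapsmanypoints} with $\mathbf{c}=(1,0)$, $n_1=1$, $n_2=(q^3+1)(q^2-2)$. Because $c_2=0$ the $i=2$ part of condition (ii) is vacuous, and the $i=1$ part reads $\lfloor (1+t)/(q^3+1)\rfloor=\lfloor t/(q^3+1)\rfloor$, which holds for every $t\in\{0,\dots,q^3-1\}$. At the single remaining value $t = q^3$, condition (ii) breaks, so I need condition (i) at $t = q^3$; plugging into the same four-term sum as in Proposition~\ref{PropGK1},
\[
\left\lfloor\frac{1+q^3}{q^3+1}\right\rfloor+\left\lfloor\frac{(q^3+1)(q^2-2)+q^3}{q^3+1}\right\rfloor+(q^2-q)\left\lfloor\frac{q^3}{q^2-q+1}\right\rfloor+\left\lfloor\frac{-q^6}{q^3+1}\right\rfloor,
\]
which I expect to simplify to $1+(q^2-2)+(q^2-q)q+(-q^3)=-1<0$, so (i) holds. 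This computation is essentially a carbon copy of the $\beta=0$ case already treated in Proposition~\ref{PropGK1}, so no real obstacle appears.

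For the failure of pure gap, I would reapply Theorem~\ref{puregapsmanypoints}, this time with $\mathbf{c}=(0,1)$, and show that both (i) and (ii) fail at $t = 0$. For (ii), the $i=2$ identity becomes $\lfloor (q^3+1)(q^2-2)/(q^3+1)\rfloor=\lfloor((q^3+1)(q^2-2)-1)/(q^3+1)\rfloor$, i.e., $q^2-2=q^2-3$, which is evidently false — this is where the divisibility $m\mid(2g-2)$ is used. For (i), the four-term sum at $t=0$ collapses to $0+(q^2-2)+0+0=q^2-2\geq 2$ for every $q\geq 2$, so the sum is non-negative and (i) also fails. Hence $(1,2g-2)$ is not a $(0,1)$-gap, and combined with the previous paragraph it is a $(1,0)$-gap without being a pure gap. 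The whole argument reduces to choosing the two critical values $t=q^3$ and $t=0$; once the right $t$ is chosen, everything is a short arithmetic check on floor functions.
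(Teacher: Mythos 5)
Your proof is correct and follows essentially the same route as the paper: for the $(1,0)$-gap you isolate $t=q^3$ as the only value where condition (ii) of Theorem~\ref{puregapsmanypoints} can fail and verify that the four-term sum equals $-1<0$ there, and for the failure of the pure-gap property you check at $t=0$ that the sum is $q^2-2\ge 0$ while the $i=2$ floors differ. The only cosmetic difference is that you phrase the second step as ``not a $(0,1)$-gap'' whereas the paper says ``not a $(1,1)$-gap''; since condition (i) is independent of $\mathbf{c}$ and the failing $i=2$ clause of (ii) is identical in both cases, the computations coincide.
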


\proof
We start by investigating when  Theorem \ref{puregapsmanypoints}(ii) does not hold. For $i=1$, we have that
$$\left \lfloor \frac{1+t}{q^3+1}\right \rfloor \neq\left \lfloor \frac{t}{q^3+1}\right \rfloor$$
if and only if $t=q^3$. In this case (i) becomes
\begin{align*}
& \left \lfloor \frac{1+q^3}{q^3+1}\right \rfloor +\left \lfloor \frac{q^5-q^3+q^2-2}{q^3+1}\right \rfloor
+(q^2-q) \left \lfloor \frac{q^3}{q^2-q+1}\right \rfloor
+\left \lfloor \frac{-q^6}{q^3+1}\right \rfloor \\
= \quad&1+q^2-2+(q^2-q)q-q^3=-1<0.
\end{align*}

On the other hand, $(1,2g-2)$ is a not a $(1,1)$-gap since for $t=0$ and $i=2$ we have
$$\left \lfloor \frac{1}{q^3+1}\right \rfloor +\left \lfloor \frac{q^5-2q^3+q^2-2}{q^3+1}\right \rfloor
>0$$
and
$$\left \lfloor \frac{q^5-2q^3+q^2-2}{q^3+1}\right \rfloor \neq \left \lfloor \frac{q^5-2q^3+q^2-3}{q^3+1}\right \rfloor.$$
\endproof

\begin{proposition}
Let $P_1$ and $P_2$ be two totally ramified places different from $P_{\infty}$ on the GK curve. For $i\in\{1,2\}$, let $$n_i:=\alpha_i(q^3+1)+\beta_i(q^2-q+1)+\gamma_i$$
where  $\alpha_i\in\{0,\ldots ,q^2-2\}$, $\beta_i\in\{0,\ldots,q\}$ and $\gamma_i\in\{1,\ldots,q^2-q\}$. Let
$$\delta_{i,j}:=
\begin{cases}
1 &  \text{ if  } \beta_i(q^2-q+1)+\gamma_i \leq \beta_j(q^2-q+1)+\gamma_j\\
 0 & \text{ otherwise.}
 \end{cases}
 $$
Then $(n_1,n_2)$ is a pure gap at $(P_1,P_2)$ if and only if $$q^2+1-\delta_{1,2}-\delta_{2,1}-\alpha_1-\alpha_2>\max\{\beta_1+\gamma_1,\beta_2+\gamma_2\}.$$
\end{proposition}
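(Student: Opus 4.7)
The plan is to apply Theorem~\ref{puregapsmanypoints} with $s=2$, $\mathbf{c}=(1,1)$ (pure gaps are $\mathbf{1}$-gaps), and $\lambda_1=\lambda_2=1$, since $P_1,P_2$ are zeros of $x^q+x$ in the plane model $y^{q^3+1}=(x^q+x)((x^q+x)^{q-1}-1)^{q+1}$ used in Proposition~\ref{PropGK1}. The sum appearing in condition (i) of the theorem is
\begin{equation*}
S(t):=\left\lfloor\frac{n_1+t}{q^3+1}\right\rfloor+\left\lfloor\frac{n_2+t}{q^3+1}\right\rfloor+(q^2-q)\left\lfloor\frac{t}{q^2-q+1}\right\rfloor+\left\lfloor\frac{-tq^3}{q^3+1}\right\rfloor
\end{equation*}
for $t\in\{0,\ldots,q^3\}$, exactly as in the proof of Proposition~\ref{PropGK1}.

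First I would pinpoint the critical values of $t$. Setting $A_i:=\beta_i(q^2-q+1)+\gamma_i$, the prescribed ranges on $(\beta_i,\gamma_i)$ force $A_i\in\{1,\ldots,q^3\}$ with $\beta_i,\gamma_i$ uniquely determined by $A_i$; thus $n_i=\alpha_i(q^3+1)+A_i$ is the Euclidean decomposition of $n_i$ modulo $q^3+1$. With $\mathbf{c}=(1,1)$, condition (ii) of Theorem~\ref{puregapsmanypoints} fails exactly when $n_i+t\equiv 0\pmod{q^3+1}$ for some $i$, that is, at the values $t=t_i:=q^3+1-A_i$ for $i=1,2$ (possibly coinciding). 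For every other $t$ condition (ii) already holds, so the proof reduces to checking $S(t_i)<0$ at each critical $t_i$.

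Next I would evaluate the four floors at $t=t_i$. By construction, $\lfloor(n_i+t_i)/(q^3+1)\rfloor=\alpha_i+1$. Writing $n_j+t_i=(\alpha_j+1)(q^3+1)+(A_j-A_i)$ with $|A_j-A_i|<q^3+1$ and using the definition of $\delta_{i,j}$ gives $\lfloor(n_j+t_i)/(q^3+1)\rfloor=\alpha_j+\delta_{i,j}$. The identity $t_i=(q+1-\beta_i)(q^2-q+1)-\gamma_i$ combined with $0<\gamma_i<q^2-q+1$ yields $\lfloor t_i/(q^2-q+1)\rfloor=q-\beta_i$, and from $0<t_i/(q^3+1)<1$ one has $\lfloor-t_iq^3/(q^3+1)\rfloor=-t_i$. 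Substituting and simplifying via the algebraic identity $(q^2-q)(q-\beta_i)+\beta_i(q^2-q+1)=q^3-q^2+\beta_i$ gives the closed form
\begin{equation*}
S(t_i)=\alpha_1+\alpha_2+\delta_{i,j}+\beta_i+\gamma_i-q^2.
\end{equation*}

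Finally I would repackage the conditions $S(t_1)<0$ and $S(t_2)<0$ as the single symmetric inequality in the statement. This splits into two subcases: $A_1\ne A_2$ (so $t_1\ne t_2$ and exactly one of $\delta_{1,2},\delta_{2,1}$ equals $1$, requiring two genuinely independent inequalities) and $A_1=A_2$ (so $t_1=t_2$, $\beta_1=\beta_2$, $\gamma_1=\gamma_2$, $\delta_{1,2}=\delta_{2,1}=1$, and one inequality suffices). I expect the main obstacle to be the bookkeeping of this consolidation: one must align each $\delta_{i,j}$ with the appropriate summand $\beta_i+\gamma_i$ inside the maximum so that the two asymmetric inequalities $\beta_i+\gamma_i+\delta_{i,j}<q^2-\alpha_1-\alpha_2$ merge into the advertised symmetric form $q^2+1-\delta_{1,2}-\delta_{2,1}-\alpha_1-\alpha_2>\max\{\beta_1+\gamma_1,\beta_2+\gamma_2\}$.
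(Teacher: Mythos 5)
Your setup and calculations track the paper's own proof exactly: the same application of Theorem~\ref{puregapsmanypoints} with $\mathbf{c}=(1,1)$, the same critical values $t_i=q^3+1-A_i$ where $A_i=\beta_i(q^2-q+1)+\gamma_i$, and your closed form $S(t_i)=\alpha_1+\alpha_2+\delta_{i,j}+\beta_i+\gamma_i-q^2$ (for $j\neq i$) is correct. The genuine gap is precisely the step you deferred. The criterion you actually derived is
\[
\beta_i+\gamma_i<q^2-\alpha_1-\alpha_2-\delta_{i,j}\quad\text{ for } i=1,2,\ j\neq i,
\]
and this does \emph{not} merge into the advertised symmetric form. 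When $A_1\neq A_2$, say $A_1<A_2$, the penalty $\delta_{1,2}=1$ attaches to $\beta_1+\gamma_1$ while $\beta_2+\gamma_2$ carries $\delta_{2,1}=0$; since $A_1<A_2$ does not force $\beta_1+\gamma_1<\beta_2+\gamma_2$ (compare $(\beta_1,\gamma_1)=(0,q^2-q)$ with $(\beta_2,\gamma_2)=(1,1)$), the maximum can be attained by the index carrying the penalty, and then the symmetric inequality is strictly weaker than your criterion.

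Concretely, take $q=3$, $\alpha_1=\alpha_2=0$, $(\beta_1,\gamma_1)=(2,6)$, $(\beta_2,\gamma_2)=(3,1)$, so $(n_1,n_2)=(20,22)$. The advertised condition reads $9+1-1-0>\max\{8,4\}$ and holds; but at $t=8$ one has $n_1+t\equiv 0\pmod{28}$, so condition (ii) of Theorem~\ref{puregapsmanypoints} fails, while the sum in condition (i) equals $\lfloor 28/28\rfloor+\lfloor 30/28\rfloor+6\lfloor 8/7\rfloor+\lfloor -216/28\rfloor=1+1+6-8=0\not<0$, so $(20,22)$ is not a pure gap. Hence the consolidation you flagged as ``the main obstacle'' is not mere bookkeeping: it cannot be carried out, because the two statements are not equivalent. (The paper's own proof makes the same silent substitution, replacing $\delta_{i,1}+\delta_{i,2}=1+\delta_{i,j}$ by $\delta_{1,2}+\delta_{2,1}$ in the final ``which is equivalent to'' step; your correct intermediate formula in fact exposes this, and the accurate conclusion is the asymmetric pair of inequalities you wrote, not the symmetric max-form in the statement.)
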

\proof
By Theorem \ref{puregapsmanypoints}, $(n_1,n_2)$ is a pure gap at $(P_1,P_2)$ if and only if, whenever $\left \lfloor \frac{n_i+t_i}{q^3+1} \right \rfloor \ne \left \lfloor \frac{n_i+t_i-1}{q^3+1} \right \rfloor$ for some $i\in\{1,2\}$ and $t_i \in \{0,\ldots,q^3\}$, one has
\begin{equation} \label{conto}
\left \lfloor \frac{n_1+t_i}{q^3+1} \right \rfloor+\left \lfloor \frac{n_2+t_i}{q^3+1} \right \rfloor+(q^2-q) \left \lfloor \frac{t_i}{q^2-q+1}\right \rfloor + \left \lfloor \frac{-q^3t_i}{q^3+1}\right \rfloor <0.
\end{equation}
Clearly, $\left \lfloor \frac{n_i+t_i}{q^3+1} \right \rfloor \ne \left \lfloor \frac{n_i+t_i-1}{q^3+1} \right \rfloor$ if and only if $n_i+t_i \equiv 0 \pmod {q^3+1}$. In particular, since $\gamma_i \geq 1$, we get $t_i=(q-\beta_i)(q^2-q+1)+q^2-q+1-\gamma_i$ for $i\in\{1,2\}$. First, we note that $t_i\geq t_j$ if and only if $\delta_{i,j}=1$. For $j \ne i$, we obtain that

$$\left \lfloor \frac{{n_j}+{t_i}}{q^3+1} \right \rfloor =\alpha_j+\left \lfloor \frac{\beta_j(q^2-q+1)+\gamma_j - \beta_i(q^2-q+1)-\gamma_i + q^3+1}{q^3+1} \right \rfloor= \alpha_j+\delta_{i,j}.$$

\noindent Thus, by \eqref{conto}, we have
$$\alpha_1+\delta_{i,1}+\alpha_2+\delta_{i,2}+(q^2-q)(q-\beta_i)-t_i<0,$$
which is equivalent to
$$\beta_i+\gamma_i<q^2+1-\delta_{1,2}-\delta_{2,1} -\alpha_1-\alpha_2.$$
Hence, we obtain that $(n_1,n_2)$ is a pure gap at $(P_1,P_2)$ if and only if $q^2+1-\delta_{1,2}-\delta_{2,1}-\alpha_1-\alpha_2>\beta_i+\gamma_i$ for  $i\in\{1,2\}$.
\endproof

\subsection{Pure gaps on curves with many rational places}

We now consider curves with many rational places that appeared in \cite{GQ2001}.
The first curve is defined over $\mathbb{F}_{q^{2n}}$ by
$$
\mathcal{X}_1 : y^m= (x^{q^n}-x)^{q^{n}-1}$$
 where  $m \mid (q^{2n}-1)$  and  $\gcd(m, q^n-1)=1$.
This curve has genus $g=(q^{n}-1)(m-1)/2$ and $N=(q^{2n}-q^{n})m+(q^{n}+1)$ rational places
over $\mathbb{F}_{q^{2n}}$.

\begin{proposition}\label{PropX_1}
Suppose $q$ is even. On the curve $\mathcal{X}_1$ defined by
$$y^{q^n+1}= (x^{q^n}-x)^{q^{n}-1},$$
let  $P_\infty$ be the unique place at infinity and $P_1$  be a totally ramified place different from $P_{\infty}$.
We have that:
\begin{enumerate}[{\normalfont(i)}]
\item $\left(1,(q^n-2)(q^n+1)\right)$ is a $(1,0)$-gap, but not a pure gap, at $(P_\infty, P_1)$, and
\item $\left(1,(q^n-3)(q^n+1)+\alpha\right)$  is a pure gap at $(P_\infty, P_1)$ for $\alpha\in\{1,2\}$.
\end{enumerate}
\end{proposition}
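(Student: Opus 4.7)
The plan is to invoke Theorem~\ref{puregapsmanypoints} with $s=2$, degree $m=q^n+1$, and the two distinguished totally ramified places $P_\infty$ (first coordinate) and $P_1$ (second coordinate). Writing $e:=q^n-1$, the valuations of $f(x)=(x^{q^n}-x)^{q^n-1}$ at the relevant places of $K(x)$ are $\lambda_\infty=-q^n e$ at the pole and $\lambda=e$ at each of the $q^n$ zeros of $x^{q^n}-x$. Since $q$ is even, $m$ is odd, so $\gcd(m,e)=\gcd(m,q^n)=1$, which means each of these $q^n+1$ places is totally ramified in the extension. The contribution to condition~(i) of the $q^n-1$ zeros of $x^{q^n}-x$ different from $P_1$ is $(q^n-1)\lfloor te/m\rfloor$; for each $t\in\{0,\ldots,q^n\}$ I would evaluate
\[
S(t)\,:=\,\left\lfloor\frac{n_1-tq^n e}{m}\right\rfloor+\left\lfloor\frac{n_2+te}{m}\right\rfloor+(q^n-1)\left\lfloor\frac{te}{m}\right\rfloor
\]
and, at each $t$ where condition~(ii) fails, verify $S(t)<0$.

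The congruences $q^n\equiv -1$ and $e\equiv -2\pmod m$ quickly isolate the bad values of $t$. Condition~(ii) for the first coordinate (with $n_1=1$, $c_1=1$) fails exactly when $1-tq^n e\equiv 0\pmod m$, i.e., $2t\equiv 1\pmod m$, which has the unique solution $t^\star:=(q^n+2)/2$ (an integer, because $q$ is even). For the second coordinate, the floor $\lfloor(n_2+te)/m\rfloor$ jumps precisely when $n_2+te\equiv 0\pmod m$: this yields $t=0$ in part~(i) (where $n_2=(q^n-2)m$), and $t=t^\star$ (for $\alpha=1$) or $t=1$ (for $\alpha=2$) in part~(ii).

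For part~(i), I would treat the two claims separately. The failure of the pure-gap property is immediate at $t=0$, where condition~(ii) for the second coordinate fails and $S(0)=0+(q^n-2)+0\ge 0$, so condition~(i) also fails. For the $(1,0)$-gap claim only the first coordinate of condition~(ii) matters, so only $t=t^\star$ needs checking; a direct computation based on the identity $(q^n-1)(q^n+2)=q^n m-2$ gives $t^\star e=q^n m/2-1$ and yields $S(t^\star)=-1<0$. For part~(ii), the same computation handles $t=t^\star$ for both $\alpha\in\{1,2\}$, and for $\alpha=2$ one must additionally evaluate $S(1)$, which reduces via $n_2+e=(q^n-2)m$ and $\lfloor(1-q^n e)/m\rfloor=1-q^n$ to $S(1)=-1$. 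All other values of $t$ satisfy condition~(ii) for both coordinates trivially, so nothing else requires inspection.

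The main obstacle is the bookkeeping of the three floor terms at $t=t^\star$, since each summand contributes nontrivial integer and fractional parts. The identity $(q^n-1)(q^n+2)=q^n m-2$ is the computational key: it allows $t^\star e$ to be written as a half-integer multiple of $m$ minus a small correction, after which the three floors acquire closed forms that sum to $-1$. A pleasant structural feature is that $t^\star$ simultaneously governs the first-coordinate jump and the $\alpha=1$ second-coordinate jump, which explains why parts~(i) and~(ii) admit essentially identical proofs.
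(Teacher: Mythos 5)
Your proposal is correct and follows essentially the same route as the paper: apply Theorem~\ref{puregapsmanypoints} with $m=q^n+1$, locate the values of $t$ where condition (ii) fails via the congruences $q^n\equiv -1$ and $q^n-1\equiv -2 \pmod{q^n+1}$ (namely $t=0$, $t=1$, and $t=(q^n+2)/2$ in the relevant cases), and verify condition (i) at exactly those values. Your floor-sum evaluations (e.g.\ $S(t^\star)=-1$) are in fact slightly more accurate than the paper's stated intermediate totals, which contain harmless off-by-one slips in the term $\left\lfloor (1+t\lambda_0)/(q^n+1)\right\rfloor$; the sign conclusions agree throughout.
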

\proof
 Let $(n_0, n_1):=\left(1,(q^n-2)(q^n+1)\right)$ and $(c_0, c_1):=(1,0)$.
We have that $\lambda_0=-q^n(q^n-1)$ and $\lambda_i=q^n-1$ for $i\in\{1, \dots q^n\}$. For $i=0$ and $t \in \{ 0, \dots, q^n\}$, we have
$$\left\lfloor \frac{n_0+t\lambda_0}{q^n+1}\right\rfloor \neq \left\lfloor \frac{n_0-1+t\lambda_0}{q^n+1}\right\rfloor$$
if and only if $n_0+t\lambda_0 \equiv 1-q^n t(q^n-1)\equiv 0 \pmod {q^n+1}$, that is, $t=(q^n+2)/2$.  In this case,
$$
\left\lfloor \frac{n_0+t\lambda_0}{q^n+1}\right\rfloor+ \left\lfloor \frac{n_1+t\lambda_1}{q^n+1} \right \rfloor +(q^n-1) \left\lfloor  \frac{t\lambda_1}{q^n+1}\right \rfloor
=-\frac{q^{2n}}{2}+\frac{3(q^n-2)}{2}+ (q^n-1)\frac{q^n-2}{2}=-2<0.
$$
On the other hand, $(1,(q^n-2)(q^n+1))$ is not a $(0,1)$-gap at $(P_\infty, P_1)$ since for $i=1$ and $t=0$ we have that
$$q^n-2=\left\lfloor \frac{n_1+t\lambda_1}{q^n+1}\right\rfloor\neq \left\lfloor \frac{n_1+t\lambda_1-1}{q^n+1}\right\rfloor=q^n-3$$
and
$$\left\lfloor \frac{n_0+t\lambda_0}{q^n+1}\right\rfloor+ \left\lfloor \frac{n_1+t\lambda_1}{q^n+1} \right \rfloor +(q^n-1) \left\lfloor  \frac{t\lambda_i}{q^n+1}\right \rfloor=q^n-2\geq 0.$$
We conclude that $(n_0,n_1)$ is not a pure gap.

Now suppose that $(n_0, n_1):=(1,(q^n-3)(q^n+1)+\alpha)$ and $(c_0, c_1):=(1,1)$. As in the previous case, we have
$$\left\lfloor \frac{n_0+t\lambda_0}{q^n+1}\right\rfloor \neq \left\lfloor \frac{n_0-1+t\lambda_0}{q^n+1}\right\rfloor$$
if and only if $t=\frac{q^{n}+2}{2}$. Also,
$$\left\lfloor \frac{n_1+t\lambda_i}{q^n+1}\right\rfloor\neq \left\lfloor \frac{n_1+t\lambda_i-1}{q^n+1}\right\rfloor$$
if and only if $n_1+t\lambda_i \equiv (q^n-3)(q^n+1)+\alpha+(q^n-1) t\equiv 0 \pmod {q^n+1}$, that is, $t=1$ for $\alpha=2$, and  $t=(q^{n}+2)/2$ for $\alpha=1$.
In these cases
\begin{eqnarray*}
&&\left\lfloor \frac{n_0+t\lambda_0}{q^n+1}\right\rfloor+ \left\lfloor \frac{n_1+t\lambda_1}{q^n+1} \right \rfloor +(q^n-1) \left\lfloor  \frac{t\lambda_1}{q^n+1}\right \rfloor \\
&=&
\begin{cases}
-\frac{q^{2n}}{2}+ (q^n-3)+ \frac{q^n-2}{2}+(q^n-1)\frac{q^n-2}{2}=-3 & \text{ if }t=(q^{n}+2)/2\\
-q^n+1+q^n-3+\left \lfloor\frac{\alpha}{2}\right\rfloor\leq -1  & \text{ if } t=1\\
\end{cases}
\end{eqnarray*}
So $(n_0,n_1)$ is a pure gap.
\endproof

\noindent
The second curve we consider in this section is defined over $\mathbb F_{q^2}$ by
$$
\mathcal{X}_2: \,
y^m=\frac{(x^{q+1}+x+1)^q}{x^{q+1}+x^q+1}
$$
 where $m$ a divisor of  $q^2-1$.
The common roots of the numerator and the denominator belong to $\F_q$ and satisfy $x^2+x+1=0$. We denote by $d$ the $\gcd(m,q-1)$.

\medskip

\noindent
{\sc Case 1:} If $q\equiv0 \pmod 3$ then
$$
g=(q-1)(m-1)+\frac{m-d}2\quad\text{ and }\quad N=q^2m+d.
$$

\noindent
{\sc Case 2:} If $q\equiv 1\pmod 3$ then
\begin{align*}
g &= (q-2)(m-1)+(m-d) \quad\text{ and} \\
N &= \begin{cases}
(q^2-1)m+2d\quad & \text{ if } (q-1)/d\equiv 0 \pmod{3}\\
(q^2-1)m \quad  &\text{ if } (q-1)/d\not\equiv0 \pmod{3}.
\end{cases}
\end{align*}

\noindent
{\sc Case 3:} If $q\equiv 2\pmod 3$ then
$$
g=q(m-1)\quad \text{ and } \quad N=(q^2+1)m.
$$
\begin{proposition}\label{PropX_2}
Let $q>3$ and $m=q^2-1$.   Let $P_1$ and $P_2$ be two totally ramified places corresponding to the roots of $x^{q+1}+x+1$
on the curve $\mathcal{X}_2$ that are different from $P_{\infty}$. For
$\alpha\in\{0,\ldots,2q-5\}$ and  $\beta\in\{0,1\}$, the pair
$$(n_1, n_2):=\left(q-1+\alpha(q^2-1)+\beta q,2q^3-5q^2+4-\alpha(q^2-1)-\beta q\right)$$  is a  pure gap at $(P_1,P_2)$.
\end{proposition}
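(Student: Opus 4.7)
The plan is to apply Theorem~\ref{puregapsmanypoints} with $s=2$, ${\bf c}=(1,1)$, $m=q^2-1$, and $\lambda_1=\lambda_2=q$. The latter holds because each $P_i$ lies above a simple, non-common root of $x^{q+1}+x+1$, at which the numerator $(x^{q+1}+x+1)^q$ of $f(x)$ vanishes to order exactly $q$; simplicity of every root of $x^{q+1}+x+1$ follows from the derivative test, as $(q+1)x^q+1=x^q+1$ in the characteristic dividing $q$ cannot vanish at any such root (otherwise $\alpha^{q+1}+\alpha+1$ would equal $1$, a contradiction).

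Next I would tabulate the support of $(f(x))_{K(x)}$ outside the images $Q_1, Q_2$ of $P_1, P_2$: it consists of the remaining non-common roots of $x^{q+1}+x+1$, each with $\lambda=q$; the non-common roots of $x^{q+1}+x^q+1$, each with $\lambda=-1$; any roots common to numerator and denominator, which by a subtraction argument lie in $\fq$ and satisfy $x^2+x+1=0$, are simple in both polynomials by the derivative test, and hence contribute $\lambda=q-1$; and the place $Q_\infty$, with $\lambda_\infty=(q+1)-q(q+1)=-m$. The number of common roots in $K$ is $2$, $0$, or $1$ according as $q\equiv 1, 2, 0\pmod 3$.

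I then locate the critical values of $t\in\{0,\dots,m-1\}$ at which condition (ii) of Theorem~\ref{puregapsmanypoints} may fail, i.e.\ $n_i+tq\equiv 0\pmod m$ for some $i\in\{1,2\}$. Since $q\cdot q\equiv 1\pmod m$, these congruences solve to $t\equiv q-1-\beta\pmod m$ (from $i=1$) and $t\equiv q-2+\beta\pmod m$ (from $i=2$); for either $\beta\in\{0,1\}$, the critical set is $\{q-2, q-1\}$, and condition (i) must be verified at both.

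At each critical $t$, writing $n_i+tq$ as an integer multiple of $m$ plus a small remainder yields a contribution from $P_1, P_2$ equal to $2q-3$ at $t=q-1$ and $2q-4$ at $t=q-2$, independent of $\alpha$ and $\beta$. The remaining terms are routine: $\lfloor tq/m\rfloor=0$ per extra non-common numerator root (since $tq<m$), $\lfloor -t/m\rfloor=-1$ per non-common denominator root (the hypothesis $q>3$ ensures $t\ge 1$), $\lfloor t(q-1)/m\rfloor=\lfloor t/(q+1)\rfloor=0$ per common root, and $-t$ from $Q_\infty$. Summing these and substituting the case counts yields totals $-1$, $-3$, or $-2$ for $q\equiv 1, 2, 0\pmod 3$, all strictly negative; this verifies condition (i) at both critical values and establishes that $(n_1, n_2)$ is a pure gap. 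The only real obstacle is the case-by-case bookkeeping across the three residues of $q$ modulo $3$; once the counts are in place, the floor-function arithmetic is elementary and comes out negative uniformly.
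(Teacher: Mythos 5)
Your proposal is correct and follows essentially the same route as the paper: apply Theorem~\ref{puregapsmanypoints} with ${\bf c}=(1,1)$ and $\lambda_1=\lambda_2=q$, locate the critical values $t\in\{q-2,q-1\}$ using $q^{-1}\equiv q\pmod{q^2-1}$, and verify condition (i) there by the same floor computations. If anything, your bookkeeping is more complete than the paper's, whose displayed sum $\varphi(t)$ is written only for the case of two common roots of numerator and denominator (i.e.\ $q\equiv 1\pmod 3$), whereas you track all three residues of $q$ modulo $3$ and obtain the uniformly negative totals $-1$, $-3$, $-2$.
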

\proof
In this case $\lambda_i=q$ for $q-1$ places associated to the roots of  $x^{q+1}+x+1$ that are not roots of $x^2+x+1$, $\lambda_i=q-1$ for two places associated to the roots of $x^2+x+1$, $\lambda_i=-1$ for $q-1$ places associated to the roots of  $x^{q+1}+x^q+1$ that are not roots of $x^2+x+1$, and $\lambda_i=-(q^2-1)$ for the last place (the pole $P_{\infty}$). We have that
$$\left \lfloor \frac{n_1+tq}{q^2-1}\right \rfloor \neq \left \lfloor \frac{n_1+tq-1}{q^2-1}\right \rfloor \iff t= \begin{cases} q-1 & \text{ if }  \beta=0\\ q-2 & \text{ if }  \beta=1 \end{cases}$$
and
$$\left \lfloor \frac{n_2+tq}{q^2-1}\right \rfloor \neq \left \lfloor \frac{n_1+tq-1}{q^2-1}\right \rfloor \iff t= \begin{cases}q-2 & \text{ if } \beta=0\\ q-1&  \text{ if }\beta=1 \end{cases}$$

Let
\begin{eqnarray*}
\varphi(t)&=&\left \lfloor \frac{n_1+tq}{q^2-1}\right \rfloor+\left \lfloor \frac{n_2+tq}{q^2-1}\right \rfloor+(q-3)\left \lfloor \frac{tq}{q^2-1}\right \rfloor
+2\left \lfloor \frac{t(q-1)}{q^2-1}\right \rfloor\\
&&+(q-1)\left \lfloor \frac{-t}{q^2-1}\right \rfloor+\left \lfloor \frac{-t(q^2-1)}{q^2-1}\right \rfloor.
\end{eqnarray*}
Suppose $\beta=0$. If $t=q-1$ then
$$ \varphi(t)=\alpha+1+2q-4-\alpha-(q-1)-(q-1)=-1<0.$$
If $t=q-2$ then
$$\varphi(t)=\alpha+2q-4-\alpha-(q-1)-(q-2)=-1<0.$$
Suppose now $\beta=1$. If $t=q-1$ then
$$\varphi(t)=\alpha+1+2q-4-\alpha-(q-1)-(q-1)=-1<0.$$
If $t=q-2$ then
$$\varphi(t)=\alpha+1+2q-5-\alpha-(q-1)-(q-2)=-1<0.$$
\endproof

%
%
%
%
%
%

\subsection{Pure gaps on the GGS curve}

Let $n$ be an odd integer. The Garcia-G\"uneri-Stichtenoth curve was introduced in \cite{GGS2010}.  It is defined by the equations:
\begin{equation*}
GGS: \begin{cases}
X^q + X = Y^{q+1}\\
Y^{q^2}-Y= Z^m
\end{cases}
\end{equation*}
where $m= (q^n+1)/(q+1)$.
This curve has genus $g=\frac{1}{2}(q-1)(q^{n+1}+q^n-q^2)$, and it is $\mathbb F_{q^{2n}}$-maximal.
A plane model for the GGS curve can be given by
$$
Z^{q^n+1}=\frac{(X^{q^2}-X)^{q+1}}{(X^q+X)^q}=(X^q+X) \left( \prod_{\stackrel{a \in \mathbb F_{q^{2}}}{ \mathrm{Tr}(a) \ne 0}} (X- a) \right)^{q+1}
$$
where $\mathrm{Tr}$ is the trace function from  $\mathbb{F}_{q^{2n}}$ to $\mathbb{F}_{q^{2}}$.

\begin{proposition} \label{pgGGS}
On the GGS curve, let $P_1$ and $P_2$ be two totally ramified places  that are different from $P_{\infty}$. Let $\alpha\in\{0,\ldots,q^2-3\}$ and $\beta\in\{0,1\}$. For $n \geq 5$,  if
\begin{align*}
n_1:=&(\beta+1)q^{n-3}(q^2-q+1)+\alpha(q^n+1) \text{ and }\\
n_2:=&(q^2-3)(q^n+1)+3q^{n-3}(q^2-q+1)-(\beta+1)q^{n-3}(q^2-q+1)-\alpha(q^n+1),
\end{align*}
then the pair  $(n_1,n_2)$ is a  pure gap at $(P_1,P_2)$.
\end{proposition}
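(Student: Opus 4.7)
The plan is to apply Theorem~\ref{puregapsmanypoints} with $s=2$ and ${\bf c}=(1,1)$. Reading the given plane model as a Kummer extension $y^m=f(x)$ of degree $m=q^n+1$, the places $Q_i$ of $K(x)$ supporting $(f)$ are the $q$ simple zeros of $x^q+x$ (with $\lambda_i=1$), the $q^2-q$ zeros $x=a$ with $\mathrm{Tr}(a)\neq 0$ (with $\lambda_i=q+1$), and the pole at infinity (with $\lambda=-q^3$). Since $n$ is odd, $\gcd(q^n+1,q+1)=q+1$ while $\gcd(q^n+1,1)=\gcd(q^n+1,q^3)=1$, so the $q$ zeros of $x^q+x$ and the point at infinity are totally ramified; thus $P_1,P_2$ have $\lambda_1=\lambda_2=1$, and in condition~(i) of the theorem the contribution of the other $q-2$ places with $\lambda=1$ vanishes because $\lfloor t/m\rfloor=0$ for $t\in\{0,\dots,m-1\}$, leaving only the $q^2-q$ places with $\lambda=q+1$ and the pole with $\lambda=-q^3$.

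Setting $A:=q^{n-3}(q^2-q+1)$, the identity $(q+1)A=q^n+q^{n-3}$ is the arithmetic heart of the argument; together with the hypothesis $n\geq 5$ it forces $A<2A<m$. Reducing the given expressions modulo $m$, one finds $n_1\equiv(\beta+1)A$ and $n_2\equiv(2-\beta)A$, so condition~(ii) of Theorem~\ref{puregapsmanypoints} can fail only at $t_\star:=m-A$ and $t_{\star\star}:=m-2A$, and the set of critical $t$'s is independent of $\beta$. A direct check further shows that the full sum
\[
S(t) := \left\lfloor \tfrac{n_1+t}{m}\right\rfloor + \left\lfloor \tfrac{n_2+t}{m}\right\rfloor + (q^2-q)\left\lfloor \tfrac{t(q+1)}{m}\right\rfloor + \left\lfloor \tfrac{-tq^3}{m}\right\rfloor
\]
does not depend on $\beta$ either (the values of the first two floors merely swap between the two $\beta$-cases), so only two genuine subcases remain.

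For $t\in\{t_\star,t_{\star\star}\}$ the first two floors are read off from the $m$-adic decompositions $n_1=(\beta+1)A+\alpha m$ and $n_2=(q^2-3-\alpha)m+(2-\beta)A$, summing to $q^2-1$ at $t_\star$ and $q^2-2$ at $t_{\star\star}$. The remaining two floors are computed via the identities $(q+1)A/m=1+(q^{n-3}-1)/m$ and $Aq^3/m=(q^2-q+1)-(q^2-q+1)/m$; under $n\geq 5$ and $q\geq 2$ these fractional parts lie in the intervals needed to pin down $\lfloor t(q+1)/m\rfloor$ and $\lfloor -tq^3/m\rfloor$ precisely (yielding $q-1$ and $-q^3+q^2-q$ at $t_\star$; $q-2$ and $-q^3+2q^2-2q+1$ at $t_{\star\star}$). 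A short cancellation then gives $S(t_\star)=S(t_{\star\star})=-1<0$, so Theorem~\ref{puregapsmanypoints} applies and $(n_1,n_2)$ is a pure gap at $(P_1,P_2)$. The main obstacle is purely the computational bookkeeping of the four floor terms, but the symmetry under $\beta\leftrightarrow 1-\beta$ and the clean identity $(q+1)A=q^n+q^{n-3}$ make the case analysis collapse.
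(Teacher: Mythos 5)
Your proof is correct and takes essentially the same route as the paper's: both apply Theorem~\ref{puregapsmanypoints} with ${\bf c}=(1,1)$, observe that condition (ii) can only fail at $t=m-A$ and $t=m-2A$ where $A=q^{n-3}(q^2-q+1)$ (the paper writes these out as $q^n+1-q^{n-1}+q^{n-2}-q^{n-3}$ and $q^n+1-2q^{n-1}+2q^{n-2}-2q^{n-3}$), and then verify condition (i) there, with all four floor values and the final sums $-1<0$ matching the paper's computation exactly. The only differences are cosmetic (your shorthand $A$, the identity $(q+1)A=q^n+q^{n-3}$, and the explicit remark that the two $\beta$-cases give the same critical $t$'s and the same totals).
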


\begin{proof}
The places $P_1$ and $P_2$ are zeros of $x^q + x$, and so $\lambda_1 = \lambda_2 = 1$. Let $t \in \{0,\ldots,q^n\}$. We have that
$$\left \lfloor \frac{n_1+t}{q^n+1} \right \rfloor \ne \left \lfloor \frac{n_1+t-1}{q^n+1} \right \rfloor$$
if and only if $n_1+t \equiv 0 \pmod {q^n+1}$, which is equivalent to
$$\begin{cases}  t=q^n+1-q^{n-1}+q^{n-2}-q^{n-3} & \text{ if  } \beta=0  \\  t=q^n+1-2q^{n-1}+2q^{n-2}-2q^{n-3} & \text{ if  } \beta=1.  \end{cases}$$
Analogously,
$$\left \lfloor \frac{n_2+t}{q^n+1} \right \rfloor \ne \left \lfloor \frac{n_2+t-1}{q^n+1} \right \rfloor$$
if and only if $n_2+t \equiv 0 \pmod {q^n+1}$, which is equivalent to
$$\begin{cases}    t=q^n+1-2q^{n-1}+2q^{n-2}-2q^{n-3} &\text{ if } \beta=0 \\ t=q^n+1-q^{n-1}+q^{n-2}-q^{n-3} &\text{ if } \beta=1. \end{cases}$$
We need to verify  that Theorem~\ref{puregapsmanypoints}(i) holds for these values of $t$.  Indeed, we have that
$$\left \lfloor \frac{n_1+t}{q^n+1} \right \rfloor+\left \lfloor \frac{n_2+t}{q^n+1} \right \rfloor+(q^2-q)\left \lfloor \frac{t}{(q^n+1)/(q+1)} \right \rfloor+\left \lfloor \frac{-tq^3}{q^n+1} \right \rfloor=$$
$$ \begin{cases} q^2-1+(q^2-q)(q-1)-q^3+q^2-q=-1 & \text{ if  } t=q^n+1-q^{n-1}+q^{n-2}-q^{n-3} \\ q^2-2+(q^2-q)(q-2)-q^3+2q^2-2q+1=-1 & \text{ if  }  t=q^n+1-2q^{n-1}+2q^{n-2}-2q^{n-3}.\end{cases}$$
\end{proof}

\begin{proposition}\label{PropGGS1}
On the GGS curve,  let $P_\infty$ be the unique place at infinity and $P_1$  be a totally ramified place different from $P_{\infty}$.
For $\alpha\in\{0,\ldots,q^2-2\}$, the pair
$$(n_1,n_2):= \left(1+\alpha(q^n+1),1+(q^2-2)(q^n+1)+q^n-2q^3+1-(1+\alpha(q^n+1))\right)$$
is a pure gap at $(P_\infty,P_1)$.
\end{proposition}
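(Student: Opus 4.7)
The plan is to apply Theorem~\ref{puregapsmanypoints} with $s=2$ and $\mathbf{c}=(1,1)$, taking the two totally ramified places to be $P_\infty$ and $P_1$, in parallel with the proof of Proposition~\ref{pgGGS}. First, I would record the valuation data coming from the plane model $y^{q^n+1}=(x^q+x)\prod_{\mathrm{Tr}(a)\neq 0}(x-a)^{q+1}$: the pole of $x$ gives $\lambda_\infty=-q^3$; each of the $q$ zeros of $x^q+x$ (one of which is $P_1$) gives $\lambda=1$; and each of the $q^2-q$ zeros of $(x-a)$ with $\mathrm{Tr}(a)\neq 0$ gives $\lambda=q+1$. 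Since $\gcd(q^n+1,q)=1$, both $P_\infty$ and $P_1$ are totally ramified, as required, while the zeros of $x-a$ split into $q+1$ places each (because $\gcd(q^n+1,q+1)=q+1$ when $n$ is odd) and contribute only to the ``other'' part of the sum in Theorem~\ref{puregapsmanypoints}(i).

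Second, I would locate the values $t\in\{0,\ldots,q^n\}$ at which condition (ii) of Theorem~\ref{puregapsmanypoints} can fail. The jump condition at the $P_\infty$-component is $n_1-tq^3\equiv 0\pmod{q^n+1}$, which, using $n_1\equiv 1$ and $q^n\equiv -1\pmod{q^n+1}$, yields the unique critical residue $t\equiv -q^{n-3}\equiv q^n+1-q^{n-3}$. The jump condition at the $P_1$-component is $n_2+t\equiv 0$, which gives $t\equiv 2q^3$ since $n_2\equiv q^n-2q^3+1\equiv -2q^3\pmod{q^n+1}$. Crucially, $\alpha$ enters only through integer multiples of $q^n+1$, so these two critical residues are independent of $\alpha$ and are the only values of $t$ to be checked.

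Third, for each critical $t$ I would verify condition (i), namely the inequality
\begin{equation*}
\left\lfloor\frac{n_1-tq^3}{q^n+1}\right\rfloor+\left\lfloor\frac{n_2+t}{q^n+1}\right\rfloor+(q-1)\left\lfloor\frac{t}{q^n+1}\right\rfloor+(q^2-q)\left\lfloor\frac{t(q+1)}{q^n+1}\right\rfloor<0,
\end{equation*}
whose four terms come respectively from $P_\infty$, $P_1$, the remaining $q-1$ zeros of $x^q+x$, and the $q^2-q$ zeros of the linear factors $(x-a)$. The third term vanishes since $t\leq q^n$, and in the first two floors the contribution of $\alpha$ cancels; a direct substitution of the two critical values of $t$ should make the remaining sum collapse to $-1$ in both cases, exactly as in Proposition~\ref{pgGGS}.

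The main obstacle is the bookkeeping for $\lfloor t(q+1)/(q^n+1)\rfloor$: its precise value depends on how many multiples of $q^n+1$ lie strictly below $t(q+1)$, so it must be evaluated separately at each critical $t$. A secondary issue is that the arithmetic inequalities used in bounding the floors --- in particular $2q^3+q^{n-3}<q^n+1$, needed so that $\lfloor(n_2+t)/(q^n+1)\rfloor$ takes the expected value --- require $n$ not to be too small, so a hypothesis like $n\geq 5$ (as in Proposition~\ref{pgGGS}) may be implicitly assumed.
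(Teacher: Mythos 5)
Your setup is correct and is the same route the paper takes: the multiplicities $\lambda_\infty=-q^3$, $\lambda=1$ at the $q$ zeros of $x^q+x$, $\lambda=q+1$ at the $q^2-q$ remaining linear factors, the application of Theorem~\ref{puregapsmanypoints} with $\mathbf{c}=(1,1)$, and the shape of the inequality in (i) are all right. The step that fails is the last one. With the pairing you chose --- the literal one, $n_1$ attached to $P_\infty$ and $n_2$ to $P_1$ --- the sum does not collapse to $-1$ at both critical values when $n\ge 5$. At $t=q^n+1-q^{n-3}$ one computes
$$n_1-tq^3=(\alpha-q^3+1)(q^n+1),\quad n_2+t=(q^2-\alpha)(q^n+1)-(2q^3+q^{n-3}),\quad t(q+1)=(q+1)(q^n+1)-(q^{n-2}+q^{n-3}),$$
so the four terms of your inequality are $\alpha-q^3+1$, $q^2-\alpha-1$, $0$ and $(q^2-q)q$, and their sum is $0$, not $-1$. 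Since the first numerator is an exact multiple of $q^n+1$, condition (ii) fails at this $t$ as well, so Theorem~\ref{puregapsmanypoints} in fact shows that $\ell(n_1P_\infty+n_2P_1)=\ell(n_1P_\infty+n_2P_1-P_\infty)+1$: the tuple is \emph{not} a pure gap under this pairing for $n\ge5$ (for $n=3$ the extra drop caused by $2q^3+q^{n-3}>q^n+1$ happens to rescue the computation).

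What is true, and what the paper's own proof establishes, is the statement with the components attached the other way around: $n_1=1+\alpha(q^n+1)$ paired with the finite place $P_1$ (numerator $n_1+t$) and $n_2$ paired with $P_\infty$ (numerator $n_2-q^3t$). The critical values are then $t=q^n$ and $t=q^n-1$, and at both of them the sum in (i) equals $(\alpha+1)+(q^2-2-\alpha-q^3)+(q^2-q)q=-1$ and $\alpha+(q^2-1-\alpha-q^3)+(q^2-q)q=-1$ respectively, for every odd $n\ge3$ (so the hypothesis $n\ge5$ you flagged is not needed for this proposition). In other words, the ordering in the statement of Proposition~\ref{PropGGS1} does not match its proof, and your blind attempt inherited the literal (wrong) order; to repair your argument, swap the roles of $n_1$ and $n_2$ --- equivalently, prove that the pair is a pure gap at $(P_1,P_\infty)$ --- and redo the floor computations with the critical residues $t\equiv-1$ and $t\equiv-2\pmod{q^n+1}$.
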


\begin{proof}
The place $P_1$ is a zero of $x^q + x$, so $\lambda_1 = 1$, while $\lambda_{\infty}=-q^3$. Let $t \in \{0,\ldots,q^n\}$.
We have that
$$\left \lfloor \frac{n_1+t}{q^n+1} \right \rfloor \ne \left \lfloor \frac{n_1+t-1}{q^n+1} \right \rfloor$$
if and only if $t=q^n$.
Analogously,
$$\left \lfloor \frac{n_2-q^3t}{q^n+1} \right \rfloor \ne \left \lfloor \frac{n_2-q^3t-1}{q^n+1} \right \rfloor$$
if and only if $2q^3+q^3t \equiv 0 \pmod {q^n+1}$, which is equivalent to $t=q^n-1$.
In both cases, we have that
$$\left \lfloor \frac{n_1+t}{q^n+1} \right \rfloor+\left \lfloor \frac{n_2-q^3t}{q^n+1} \right\rfloor+(q^2-q)\left \lfloor \frac{t}{(q^n+1)/(q+1)} \right \rfloor=-1<0.$$
The claim now follows from Theorem \ref{puregapsmanypoints}.
\end{proof}

We note that the structure of pure gaps at $(P_\infty,P_1)$ can be different at $(P_1,P_2)$. In fact,  the automorphism group of $GGS$ acts on the $q$ places, $P_1,\ldots,P_q$, while $P_\infty$ is a fixed place; see \cite{GOS,GMP}.
\section{Pure gaps on the Suzuki curve}\label{Suzuki}

Let $\mathcal{S}$ denote a non-singular model of the projective curve over $\mathbb{F}_q$ defined by
$$\mathcal{S}: y^q-y=x^{q_0}(x^q-x)$$
 where $q_0=2^n$ and $q=2q_0^2=2^{2n+1}$ for some positive integer $n$. The genus of $\mathcal{S}$ is  $q_0(q-1)$. We note that $\mathcal{S}$ has ($q^2+1$) $\mathbb{F}_q$-rational places, namely, there are  $q^2$ places centered at points of type $P_{a,b}:=(a,b,1)$ where $a,b \in \mathbb{F}_q$ in addition to a single places at infinity, $P_\infty$. The curve $\mathcal{S}$ is known as the Suzuki curve because the full automorphism group of $\mathcal{S}$ is isomorphic to the Suzuki group $Sz(q)$ of order $q^2(q^2+1)(q-1)$. Moreover, $Aut(\mathcal{S})$ acts on $\mathcal{S}(\mathbb{F}_q)$ as $Sz(q)$ acts on the points of the Suzuki-Tits ovoid in $\mathrm{PG}(3,q)$; see \cite[Theorem 12.13]{HKT}.

Let $P_1$ and $P_2$ be distinct $\mathbb{F}_q$-rational places on $\mathcal{S}$. We assume that $P_1=P_\infty$ and $P_2=P_{0,0}$. This assumption is not a restriction as $Aut(\mathcal{S}) \cong Sz(q)$ acts $2$-transitively on  $\mathcal{S}(\mathbb{F}_q)$. It turns out that
$$G(P_1)=G(P_2)=\N_0 \setminus \left \langle q, q+q_0, q+2q_0, q+2q_0+1\right \rangle;$$
see \cite{HS}. Following the notation used by Matthews in \cite{Mat2004S}, given $n_i \in G(P_i)$ with $i\in\{1,2\}$, we write
\begin{equation}\label{alpha}
n_i= r_i \left(q+2q_0+1\right) + m_iq_0+s_i \quad\text{ where }\quad r_i = \left \lfloor \frac{n_i}{q+2q_0+1} \right \rfloor
\end{equation}
and $m_i$, $s_i$ are non-negative integers such that $0 \leq s_i \leq q_0-1$. Set
\begin{equation*} 
j_i:= \begin{cases} s_i+q_0  & \text{ if } 0 \leq s_i \leq \left\lfloor (m_i-1)/2 \right\rfloor +1 \\ s_i  & \text{ if } \left\lfloor (m_i-1)/2 \right\rfloor +2 \leq s_i \leq q_0-1.\end{cases}
\end{equation*}
For $n \in G(P_1)$, let
$$\beta(n):= \min \{m \in G(P_2) : (n, m) \in H(P_1,P_2)\}.$$
The function $\beta$ is a bijection between $G(P_1)$ and $G(P_2)$. Matthews showed that  $\beta({n_i})=2g-1+q-(q-1)j_i-n_i$ for the Suzuki curve. 
We recall Equation~\eqref{gapssuzuki}:
\begin{equation*}
G_0(P_1,P_2)= \{(n_1,n_2) \in G(P_1) \times G(P_2) : n_1 < \beta^{-1}(n_2) \text{ and } \beta(n_1) > n_2 \}.
\end{equation*}
Since $Aut(\mathcal{S}) \cong Sz(q)$ acts $2$-transitively on  $\mathcal{S}(\mathbb{F}_q)$, we have that $G_0(P_1, P_2)=G_0(P_2, P_1)$. So $(n_1,n_2) \in G_0(P_1,P_2)$ if and only if $(n_2,n_1) \in G_0(P_1,P_2)$. Therefore, by~\cite[Lemma 2.5]{CT2005}, it turns out that
\begin{eqnarray} \label{puregap}
G_0(P_1,P_2) = \{(n_1,n_2) \in G(P_1) \times G(P_2) : n_2 < \beta(n_1)  \text{ and }n_1 < \beta(n_2)\}.
\end{eqnarray}




\begin{proposition}\label{pg}
Let $P_1,P_2 \in \mathcal{S}(\mathbb{F}_q)$ with $P_1\neq P_2$. If
\begin{equation*} \label{a1a2}
n_1 := \epsilon(q+2q_0+1)+1 \quad\text{ and }\quad n_2:= 2g-q-1-\epsilon(q+2q_0+1)
\end{equation*}
then $(n_1,n_2)$ and $(n_2,n_1)$ are pure gaps at $(P_1,P_2)$ for every $\epsilon\in\{0,\dots, 2q_0-3\}$.
\end{proposition}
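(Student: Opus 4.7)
The plan is to apply the criterion in Equation~\eqref{puregap}, which reduces showing $(n_1,n_2)\in G_0(P_1,P_2)$ to four sub-conditions: $n_1\in G(P_1)$, $n_2\in G(P_2)$, $n_2<\beta(n_1)$, and $n_1<\beta(n_2)$. Since $G_0(P_1,P_2)=G_0(P_2,P_1)$ by the $2$-transitive action of $Sz(q)$ on $\mathcal{S}(\mathbb{F}_q)$ already used in the excerpt, the statement for $(n_2,n_1)$ is a free consequence of the one for $(n_1,n_2)$; likewise $G(P_1)=G(P_2)$, so the gap condition only has to be checked at $P_1$.

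To invoke Matthews' formula $\beta(n)=2g-1+q-(q-1)j-n$, I first put $n_1$ and $n_2$ into the shape~\eqref{alpha}. With $N:=q+2q_0+1$, the identity $(2q_0-2)N=2g-2$, immediate from $q=2q_0^2$, allows the rewriting
\[
n_2=(2q_0-3-\epsilon)N+(2q_0+2),
\]
while $n_1=\epsilon N+1$ is already in the required form. Reading off the parameters gives $(r_1,m_1,s_1)=(\epsilon,0,1)$ together with $(r_2,m_2,s_2)=(2q_0-3-\epsilon,2,2)$ when $q_0\geq 3$, or $(r_2,m_2,s_2)=(2q_0-3-\epsilon,3,0)$ when $q_0=2$. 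Feeding these into the piecewise definition of $j_i$ produces $j_1=1$ and $j_2=2$ in both regimes, whence Matthews' formula yields $\beta(n_1)=2g-n_1$ and $\beta(n_2)=\epsilon N+2$. The inequalities $n_2<\beta(n_1)$ and $n_1<\beta(n_2)$ then collapse to $-q<0$ and $1<2$, both trivial.

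The main obstacle is to certify that $n_1,n_2$ genuinely lie in $G(P_1)=\N_0\setminus\langle q,q+q_0,q+2q_0,q+2q_0+1\rangle$. An element of this semigroup has the form $(a+b+c+d)q+(b+2c+2d)q_0+d$ with $a,b,c,d\in\N_0$, so a would-be representation $n=Aq+Bq_0+C$ comes from the semigroup precisely when $0\leq C\leq B/2\leq A$. The relation $q=2q_0^2$ introduces the single exchange $(A,B)\mapsto(A-1,B+2q_0)$, and together with $C\equiv n\pmod{q_0}$ this pins down a finite family of candidate triples. A short case analysis in which the hypothesis $\epsilon\leq 2q_0-3$ plays the decisive role shows that none of these triples can simultaneously satisfy $0\leq C\leq B/2\leq A$ for $n_1$ or for $n_2$; alternatively one can invoke the explicit gap descriptions recalled in~\cite{HS,Mat2004S}.
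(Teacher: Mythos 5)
Your overall route is the same as the paper's: reduce to the criterion in~\eqref{gapssuzuki}/\eqref{puregap}, compute the Matthews parameters $(r_i,m_i,s_i)$ and $j_i$ for $n_1$ and $n_2$, and check the two inequalities via $\beta(n)=2g-1+q-(q-1)j-n$. That second half of your argument is complete and correct: the rewriting $n_2=(2q_0-3-\epsilon)(q+2q_0+1)+(2q_0+2)$, the values $j_1=1$ and $j_2=2$ (including the split between $q_0=2$ and $q_0>2$), and the resulting trivial inequalities all match the paper's computation. The symmetry reduction via the $2$-transitivity of $Sz(q)$ is also the one the paper uses.

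The genuine gap is the first obligation: you never actually prove that $n_1$ and $n_2$ lie in $G(P_1)=\N_0\setminus\langle q,q+q_0,q+2q_0,q+2q_0+1\rangle$. Your reformulation (membership in the semigroup is equivalent to the existence of a representation $n=Aq+Bq_0+C$ with $0\leq C\leq B/2\leq A$) is fine, but the sentence ``a short case analysis in which the hypothesis $\epsilon\leq 2q_0-3$ plays the decisive role shows that none of these triples can simultaneously satisfy'' the constraints is precisely the nontrivial arithmetic that has to be exhibited --- it is where the relation $q=2q_0^2$ and the range of $\epsilon$ actually do work, and it occupies roughly half of the paper's proof. The paper carries it out by reducing the coefficient of $q+2q_0+1$ modulo $q_0$ to force $D\equiv\epsilon+1\pmod{q_0}$, ruling out $\epsilon+1<q_0$ directly, and in the case $\epsilon+1\geq q_0$ normalizing the remaining coefficients and deriving the contradiction $q_0+A=C$ with $C\leq q_0-1$; the bound $\epsilon\leq 2q_0-3$ is then what guarantees $n_2=\epsilon'(q+2q_0+1)+1$ with $\epsilon'=2q_0-3-\epsilon\geq 0$, so the same argument applies to $n_2$. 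Your fallback of ``invoking the explicit gap descriptions in \cite{HS,Mat2004S}'' does not discharge this either: those references supply the semigroup $H(P)$ and the map $\beta$, not a ready-made statement that every integer of the form $\epsilon(q+2q_0+1)+1$ with $0\leq\epsilon\leq 2q_0-3$ is a gap. As written, the proof is incomplete at exactly this step; supplying the modular case analysis (or an equivalent explicit argument) would close it.
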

\begin{proof}
First let us show that $n_1$ and $n_2$ belong to $$G(P_1)=G(P_2)=\N_0 \setminus \left \langle q, q+q_0, q+ 2q_0, q+2q_0+1\right \rangle.$$
Suppose that $n_1 = Aq+B(q+q_0)+C(q+ 2q_0)+D(q+2q_0+1)$ for $A,B,C,D\in\mathbb N_0$. Then $n_1\equiv D\equiv \epsilon+1\pmod{q_0}$.  If $\epsilon+1<q_0$ then
$$\epsilon(q+2q_0+1)+1 =Aq+B(q+q_0)+C(q+ 2q_0)+(\epsilon+1)(q+2q_0+1)$$
would imply that $0=(A+B+C+1)q+(B+2C+2)q_0\geq q+2q_0$, which is not possible.
Therefore $\epsilon+1 \geq q_0$,
and so
$$n_1=Aq+B(q+q_0)+C(q+2q_0)+(\epsilon+1-q_0)(q+2q_0+1),$$
with $0\leq A,B,C\leq q_0-1$.
This yields
$$(q_0-1)(q+2q_0+1)+1=Aq+B(q+q_0)+C(q+2q_0),$$
that is,
$$2q_0^3-q_0=2Aq_0^2+B(2q_0^2+q_0)+C(2q_0^2+2q_0),$$
or equivalently,
$$2q_0^2-1=2Aq_0+B(2q_0+1)+C(2q_0+2).$$ Therefore
$2q_0^2-1\equiv B+2C\pmod{2q_0}$. The only possibility is $B=2q_0-2C-1$, which gives
\begin{eqnarray*}
2q_0^2-1=2Aq_0+4q_0^2-1-2Cq_0 &\iff& 2q_0^2+2A q_0=2Cq_0\\
&\iff& q_0+A=C,
\end{eqnarray*}
a contradiction, since $C\leq q_0-1$ and $A\geq 0$. Finally, we note that  $n_2$ can be written as
$$n_2=\epsilon^{\prime}(q+2q_0+1)+1,$$
with $\epsilon^{\prime}=2q_0-3-\epsilon$. The same argument as above shows that $n_2\in G(P_2)$.

Now we need to show that $(n_1,n_2)$ is a pure gap at $(P_1,P_2)$. By the definition of $n_1$, we have that
$m_1q_0+s_1=1$ and hence
$(m_1,s_1) = (0,1)$ and $j_1 = 1$.
 On the other hand, we can write $n_2$ as
$$n_2=(2q_0-\epsilon-3)(q+2q_0+1)+2q_0+2,$$
and so $m_2q_0+s_2=2q_0+ 2$. Thus,
$$(m_2,s_2) = \begin{cases} (3,0) &\text{ if }  q_0=2\\ (2,2) & \text{ if }  q_0>2\\ (0,1)  \end{cases} \quad\textup{ and }\quad  j_2 =  2.$$
\noindent For $k,\ell \in \{1, 2\}$ with $k \neq \ell$,  we obtain that
$a_k < \gamma(a_\ell)$ if and only if
$$
2g-q-a_\ell< \\2g-1+q-(q-1)j_{\ell}-a_\ell
$$  if and only if $-q<(q-1)(1-j_{\ell})$, which is always satisfied, given that $j_{\ell}\in\{1,2\}$.
\end{proof}

The pure gaps $(n_1,n_2)$  in the previous result satisfy $n_1+n_2=2g-q$.  Next we will show that this
amount is actually the largest possible value for the sum of the components of any pair of pure gaps on the Suzuki curve.
This can be easily checked when $q_0=2$.


\begin{proposition} \label{degmax}
Let $q_0>2$ and $P_1,P_2 \in \mathcal{S}(\mathbb{F}_q)$ with $P_1\neq P_2$. Then
$$\max \{n_1+n_2 : (n_1,n_2) \in G_0(P_1,P_2)\}=2g-q.$$
\end{proposition}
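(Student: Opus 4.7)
The plan is to combine the lower bound supplied by Proposition \ref{pg} with a matching upper bound obtained from the characterization of pure gaps in \eqref{puregap} together with Matthews' formula $\beta(n_i)=2g-1+q-(q-1)j_i-n_i$. Proposition \ref{pg} directly produces pairs with $n_1+n_2=2g-q$, so $\max\{n_1+n_2\}\geq 2g-q$; only the reverse inequality needs work.

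To establish it, I would take any $(n_1,n_2)\in G_0(P_1,P_2)$ and turn the two strict inequalities $n_2<\beta(n_1)$ and $n_1<\beta(n_2)$ of \eqref{puregap} into $n_1+n_2\leq 2g-2+q-(q-1)\max(j_1,j_2)$, reducing the problem to a lower bound on $\max(j_1,j_2)$. From the definition of $j_i$ I would verify that, for $q_0>2$, one always has $j_i\geq 1$, and that $j_i=1$ occurs precisely when $(m_i,s_i)=(0,1)$, that is, $n_i=r_i(q+2q_0+1)+1$; in every other situation either we are in the first branch of the definition where $j_i=s_i+q_0\geq q_0\geq 4$, or we are in the second branch with $m_i\geq 1$, forcing $s_i\geq 2$ and hence $j_i\geq 2$.

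If $\max(j_1,j_2)\geq 2$, the displayed inequality immediately yields $n_1+n_2\leq 2g-q$. The delicate case, which I expect to be the main obstacle, is $j_1=j_2=1$. There $n_i=r_i(q+2q_0+1)+1$ for $i=1,2$, so $n_2<\beta(n_1)=2g-n_1$ becomes $(r_1+r_2)(q+2q_0+1)+2<2g$. Using the arithmetical identity $(2q_0-2)(q+2q_0+1)=2g-2$, which follows from $q=2q_0^2$ and $g=q_0(q-1)$, this forces $r_1+r_2\leq 2q_0-3$, so
$$n_1+n_2\leq (2q_0-3)(q+2q_0+1)+2=2g-q-2q_0-1<2g-q.$$
Combining both cases yields $n_1+n_2\leq 2g-q$ for every pure gap, which together with Proposition \ref{pg} proves equality. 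The main difficulty is precisely this $j_1=j_2=1$ branch: the generic bound coming from a single $\beta$-inequality is not sharp enough, and one has to extract an extra strict gap from the explicit arithmetic of the $r_i$.
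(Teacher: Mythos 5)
Your proof is correct and follows essentially the same route as the paper: both turn the characterization \eqref{puregap} together with Matthews' formula $\beta(n_i)=2g-1+q-(q-1)j_i-n_i$ into the constraint that $j_1,j_2$ must be small, and both settle the remaining case $j_1=j_2=1$ by the same arithmetic on $r_1+r_2$ via $2g-2=(2q_0-2)(q+2q_0+1)$. Your use of $\max(j_1,j_2)$ in place of the paper's average $(j_1+j_2)/2$ is a mild streamlining that absorbs the paper's second case ($j_1=2$, $j_2=1$) into the immediate bound $n_1+n_2\leq 2g-2+q-2(q-1)=2g-q$, so no substantive difference remains.
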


\begin{proof}
Suppose by contradiction that $(n_1,n_2) \in G_0(P_1,P_2)$ is such that $n_1+n_2>2g-q$. By \eqref{puregap}, we get that
$$\begin{cases}  n_1 < 2g-1+q-(q-1)j_2-n_2 \\ n_2 < 2g-1+q-(q-1)j_1-n_1. \end{cases}$$
So
\begin{eqnarray}\label{desig}
&&2g-q+1 \leq n_1+n_2 <2g-1+q-(q-1)\left(\dfrac{j_1+j_2}{2}\right) \\
&\Longrightarrow & (q-1)\left(\dfrac{j_1+j_2}{2}\right) < 2q-2 \nonumber\\
&\Longrightarrow & j_1+j_2 < 4. \nonumber
\end{eqnarray}
Therefore, either we have $j_1=j_2=1$ or we may suppose that $j_1=2$ and $j_2=1$.
In the former case, we have  $s_1=s_2=1$ and $m_1=m_2=0$. Then, by using (\ref{alpha}), the inequalities in (\ref{desig}) become
\begin{eqnarray*}
&& 2q_0(q-1)-q+1 \leq (r_1+r_2)\left(q+2q_0+1\right)+2 < 2q_0(q-1) \\
&\Longrightarrow &  4q_0^3-2q_0^2-2q_0-1 \leq  (r_1+r_2)(2q_0^2+2q_0+1) < 4q_0^3-2q_0-2 \\
&\Longrightarrow & 2q_0-3+\frac{2q_0+2}{2q_0^2+2q_0+1} \leq r_1+r_2 < 2q_0-2,
\end{eqnarray*}
a contradiction. In the case $j_1=2$ and $j_2=1$, we have $s_1=2$, $s_2=1$, $m_1\in \{0,1,2\}$, and $m_2=0$. Similarly, this yields
\begin{eqnarray*}
&& 2q_0(q-1)-q+1 \leq (r_1+r_2)\left(q+2q_0+1\right)+2q_0+3 < 2q_0(q-1) \\
&\Longrightarrow &  4q_0^3-2q_0^2-4q_0-2 \leq  (r_1+r_2)(2q_0^2+2q_0+1) < 4q_0^3-4q_0-3 \\
&\Longrightarrow & 2q_0-3+\frac{1}{2q_0^2+2q_0+1} \leq r_1+r_2 < 2q_0-2,
\end{eqnarray*}
a contradiction again.
\end{proof}

The following result shows that the pure gaps in Proposition~\ref{pg} are precisely the ones for which $n_1+n_2=2g-q$.

\begin{proposition}\label{PropSuzuki2}
Let $P_1,P_2 \in \mathcal{S}(\mathbb{F}_q)$ with $P_1\neq P_2$ and $q_0>2$. Then $(n_1,n_2)$ is a pure gap at $(P_1,P_2)$ with $n_1+n_2=2g-q$ if and only if
either $(n_1,n_2)$ or $(n_2,n_1)$ is equal to
$$
(\epsilon(q+2q_0+1)+1, 2g-q-1-\epsilon(q+2q_0+1))
$$
for some $\epsilon\in\{0,\dots, 2q_0-3\}$.
\end{proposition}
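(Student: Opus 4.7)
The \emph{if} direction is exactly Proposition~\ref{pg}, so it remains to establish the converse. Let $(n_1,n_2)$ be a pure gap at $(P_1,P_2)$ with $n_1+n_2=2g-q$. Combining the characterization \eqref{puregap} with Matthews' formula $\beta(n_i)=2g-1+q-(q-1)j_i-n_i$ yields the pair of inequalities $n_1+n_2<2g-1+q-(q-1)j_i$ for $i\in\{1,2\}$. Substituting $n_1+n_2=2g-q$ gives $(q-1)j_i<2q-1$, and hence $j_i\leq 2$. Since $q_0>2$, i.e.\ $q_0\geq 4$, the branch $j_i=s_i+q_0\geq q_0\geq 4$ is excluded, so $j_i=s_i\in\{1,2\}$; the constraint $s_i\geq \lfloor(m_i-1)/2\rfloor+2$ coming from the second branch in the definition of $j_i$ then forces $m_i=0$ when $s_i=1$, and $m_i\in\{0,1,2\}$ when $s_i=2$.

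By the symmetry $G_0(P_1,P_2)=G_0(P_2,P_1)$ induced by the $2$-transitive action of $Sz(q)$, it suffices to treat the three subcases $(j_1,j_2)\in\{(1,1),(2,2),(1,2)\}$. In each one I would substitute the canonical decomposition $n_i=r_i(q+2q_0+1)+m_iq_0+s_i$ into the identity $n_1+n_2=4q_0^3-2q_0^2-2q_0$ and study the resulting Diophantine equation in $N:=r_1+r_2$. For $(j_1,j_2)=(1,1)$ the equation becomes $N(2q_0^2+2q_0+1)=4q_0^3-2q_0^2-2q_0-2$; since $(2q_0-3)(2q_0^2+2q_0+1)=4q_0^3-2q_0^2-4q_0-3$ and $(2q_0-2)(2q_0^2+2q_0+1)=4q_0^3-2q_0-2$ straddle the right-hand side, no integer $N$ works. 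The case $(j_1,j_2)=(2,2)$ is analogous: the added term $(m_1+m_2)q_0+4$ with $m_1+m_2\in\{0,\ldots,4\}$ is too small to land the right-hand side on a multiple of $2q_0^2+2q_0+1$ once $q_0\geq 4$.

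For the remaining case $(j_1,j_2)=(1,2)$, the equation reduces to $N(2q_0^2+2q_0+1)+m_2q_0+3=4q_0^3-2q_0^2-2q_0$, and the same bracketing argument applied to each $m_2\in\{0,1,2\}$ leaves only $N=2q_0-3$, forcing $m_2=2$. Setting $\epsilon:=r_1\in\{0,\ldots,2q_0-3\}$ recovers exactly $n_1=\epsilon(q+2q_0+1)+1$ and $n_2=2g-q-1-\epsilon(q+2q_0+1)$, which is the parametrization of Proposition~\ref{pg}. The main obstacle is the Diophantine bookkeeping: one must verify that the bounds on $r_i$, $m_i$ and $s_i$ forced by $j_i\leq 2$ together with $q_0\geq 4$ are sharp enough to exclude every competing integer solution, and in particular that no $(j_1,j_2)$ other than $(1,2)$ (and its swap) admits any solution at all.
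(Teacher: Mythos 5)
Your proposal is correct, and its skeleton coincides with the paper's: the \emph{if} direction is delegated to Proposition~\ref{pg}, and the converse begins by combining \eqref{puregap} with Matthews' formula to force $j_1,j_2\in\{1,2\}$, whence $s_i=j_i$ and the stated constraints on $m_i$. Where you diverge is the endgame. The paper cases on $j_1$ alone: if $j_1=1$ then $n_1=r_1(q+2q_0+1)+1$ already has the claimed form and nothing more is needed; if $j_1=2$ it rewrites $n_2=(2q_0-3-r_1)(q+2q_0+1)+(2-m_1)q_0+1$, reads off $(m_2,s_2)$, and kills $m_1\in\{0,1\}$ by computing $j_2=q_0+1>2$, leaving only $m_1=2$. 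You instead enumerate the pairs $(j_1,j_2)$ and run, on the equality $n_1+n_2=4q_0^3-2q_0^2-2q_0$, the same bracketing of $N=r_1+r_2$ between consecutive multiples of $2q_0^2+2q_0+1$ that the paper uses to prove Proposition~\ref{degmax}; this is a pleasant unification of the two propositions under one Diophantine mechanism, at the cost of having to dispose of the vacuous cases $(1,1)$ and $(2,2)$ which the paper's routing never meets. One small imprecision: in the $(2,2)$ case the obstruction is not that $(m_1+m_2)q_0+4$ is ``too small'' --- for $m_1+m_2=2$ the right-hand side misses the multiple $(2q_0-3)(2q_0^2+2q_0+1)$ by exactly $-1$ --- but that the discrepancy $(2-m_1-m_2)q_0-1$ can never vanish since $q_0\nmid 1$; you should state it that way when you write out the bookkeeping you flag at the end. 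With that fixed, both routes are complete proofs.
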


\begin{proof}
Proposition~\ref{pg} proves one direction. Conversely, we assume that $(n_1,n_2)$ is a pure gap at $(P_1,P_2)$ where $n_2=2g-q-n_1$. By \eqref{puregap}, we have that $(q-1) j_{1} < 2q-1$ and $(q-1)j_{2} < 2q-1$.
This means that $j_{1}, j_{2} \in \{1,2\}$.
\begin{enumerate}[{\normalfont(i)}]
\item When $j_{1}=1$, we have that $m_{1}=0$ and $s_{1}=1$. By (\ref{alpha}),  we get
$n_1=r_{1}(q+2q_0+1)+1.$ The claim follows with $\epsilon = r_{1}$.
\item When $j_{1}=s_{1}=2$, we have that $m_{1}\in\{0,1,2\}$. In this case
 \begin{eqnarray*}
n_2&=& 2q_0(q-1)-q-r_{1}(q+2q_0+1)-m_{1}q_0-2  \\
 &=&(2q_0-3-r_{1})(q+2q_0+1)+(2-m_{1})q_0+1.
 \end{eqnarray*}
We note that $2q_0-3-r_{1}\geq 0$. If $m_{1}=0$ then $m_{2}=2-m_{1}=2$ and $s_2=1$ imply  that  $j_{2}=q_0+1>3$.
 The choice  $m_{1}=1$ gives that $m_{2}=2-m_{1}=1$ and $s_2=1$. This means that  $j_{2}=q_0+1>3$, a contradiction. When $m_{1}=2$, we have that $n_2 = (2q_0-3-r_{1})(q+2q_0+1)+1$ \ has the desired form with $\epsilon = 2q_0-3-r_{1}$.
\end{enumerate}
The proof follows in a similar fashion by considering the same possibilities for $j_2$.
\end{proof}



\section{Applications of pure gaps to Coding Theory}

In this section we construct codes based on the families of pure gaps we obtained in Sections~\ref{Kummer} and~\ref{Suzuki}.
Given a curve $\mathcal{X}$, we fix  two distinct  rational places, $P_1$  and $P_2$. Let $(a_1,a_2)$ be a pure gap at $(P_1,P_2)$.
Following Theorem~\ref{distMany}, we consider  $(b_1,b_2):=(a_1,a_2)$ so that $G =(2a_1-1)P_1+(2a_2-1)P_2$ and $\deg(G)=2(a_1+a_2-1)$. We choose  $D$ to be the sum of all rational places on the curve $\mathcal{X}$ that are different from $P_1$ and $P_2$.
In all the cases we considered, we have that $2g-2 < \deg(G) < n=|\mathcal{X}(\mathbb{F}_q)|-2$, and therefore the parameters of the $C_\Omega(D,G)$ code satisfy $k = n+g-1-\deg(G)$ and $d\geq \deg (G)-2g+4$. Furthermore, our families of pure gaps  give rise   to codes with the same parameters, since $\deg(G)$ remains constant.

We denote by $R = k/n$ the relative minimum distance of the code $C_\Omega(D,G)$, and by $\delta = d/n$ the information rate.  By the Singleton bound, we have that $R+\delta \leq 1+1/n$. In Table \ref{table1} we present the parameters of our codes. With the exception
of the one based on  $\mathcal{X}_1$, they all satisfy $\displaystyle\lim_{n \rightarrow \infty} R+\delta =1$.

\begin{table}
\caption{Parameters of the codes}\label{table1}
\vspace*{0.3 cm}
\tabcolsep= 0.5 mm
{\footnotesize
\begin{tabular}{|c|c|c|c|c|c|c|}
\hline
$n$&$k$&$d\geq$&$\deg (G)$&Curve&Conditions&Ref.\\
\hline

\hline
$\begin{array}{c}q^8-q^6\\+q^5-1\\ \end{array}$&$\begin{array}{c}q^8-q^6-q^5/2+5q^3\\-7q^2/2+2q-2\\ \end{array}$&$\begin{array}{c}q^5-4q^3\\+3q^2-2q\\ \end{array}$&$\begin{array}{c}2q^5-6q^3\\+4q^2-2q-2\\ \end{array}$&GK&any $q$&Prop.~\ref{PropGK1}\\

\hline
$\begin{array}{c}(q^{2n}-q^n)m\\ +q^n-1\\ \end{array}$&$\begin{array}{c}(q^n-1)q^n(m-1)+\\(q^n-1)(m+3)/2-1\\ \end{array}$&$\begin{array}{c}(q^n-1)q^n-\\(q^n-1)m+4\\ \end{array}$&$(q^n-1)^2$&$\mathcal{X}_1$&$\begin{array}{c}m \mid (q^{2n}-1)\\ \gcd(m,q^n-1)=1\\  \end{array}$&Prop.~\ref{PropX_1}\\

\hline
$\begin{array}{c}q^4-q^2\\ +q-3\\ \end{array}$&$\begin{array}{c}q^4-3q^3+17q^2/2\\ -7q/2-6 \end{array}$&$\begin{array}{c}2q^3-9q^2\\+7q+4 \\ \end{array}$&$\begin{array}{c}4q^3-10q^2\\+2q+4\\ \end{array}$&$\mathcal{X}_2$&$q\equiv 0 \pmod 3$&Prop.~\ref{PropX_2}\\

\hline
$\begin{array}{c}(q^2-1)^2\\+2q-4\\ \end{array}$&$\begin{array}{c}q^4-3q^3+7q^2\\ -3q-4 \end{array}$&$\begin{array}{c}2q^3-8q^2+8q\\ \end{array}$&$\begin{array}{c}4q^3-10q^2\\+2q+4\\ \end{array}$&$\mathcal{X}_2$&$q\equiv 1 \pmod 3$&Prop.~\ref{PropX_2}\\

\hline
$q^4-3$&$\begin{array}{c}q^4-3q^3 +10q^2\\-4q-8 \end{array}$&$\begin{array}{c}2q^3-10q^2\\+6q+8\\ \end{array}$&$\begin{array}{c}4q^3-10q^2\\+2q+4\\ \end{array}$&$\mathcal{X}_2$&$q\equiv 2 \pmod 3$&Prop.~\ref{PropX_2}\\

\hline
$\begin{array}{c}q^{2n+2}-q^{2n}\\ -q^{n+3}\\+q^{n+2}-1\\ \end{array}$&$\begin{array}{c}q^{2n+2}-q^{2n}-q^{n+2}\\ -q^{n+2}/2+3q^n/2\\+4q^3-3q^2/2 \\\end{array}$&$\begin{array}{c}  q^{n+2}-q^n\\-3q^3+q^2+2  \\ \end{array}$&$\begin{array}{c}2q^{n+2}-2q^n\\ -4q^3+2q^2-2\\ \end{array}$&GGS&$\begin{array}{c}n\geq 5\\ n \textrm{ odd}\\ \end{array}$&Prop.~\ref{PropGGS1}\\

\hline
$q^2-1$&$\begin{array}{c}q^2-3q_0q\\+2q-3q_0-2\\ \end{array}$& $\begin{array}{c}2q_0q-2q\\-2q_0+4\\ \end{array}$ &$\begin{array}{c}4q_0q-2q\\-4q_0\\ \end{array}$&Suzuki&$\begin{array}{c}q_0=2^s\\ q=2q_0^2\\ s\geq 1\\ \end{array}$&Prop.~\ref{PropSuzuki2}\\

\hline
\end{tabular}
}
\end{table}


\section{Acknowledgements}
The first author was partially supported by the Italian Ministero dell'Istruzione, dell'Universit\`a
e della Ricerca (MIUR) and  the Gruppo Nazionale per le Strutture Algebriche, Geometriche e le loro Applicazioni (GNSAGA-INdAM).

The second author received support for this project provided by a PSC-CUNY award, \#60235-00 48, jointly funded by The Professional Staff Congress and The City University of New York.

Part of this work was carried out when the authors were visiting IMPA and UFRJ in Rio de Janeiro, Brazil.  We thank the warm hospitality we received from both institutes.


\end{document}